\newtheorem{thm}{Theorem}[section]
\newtheorem{prop}[thm]{Proposition}
\newtheorem{lem}[thm]{Lemma}
\newtheorem{false statement}{False statement}
\newtheorem{cor}[thm]{Corollary}
\newtheorem{fact}[thm]{Fact}
\theoremstyle{definition}
\newtheorem{claim}{Claim}
\newtheorem{conj}[thm]{Conjecture}
\newcommand{\ex}{{\rm ex}}
\def\ff{\mathcal{F}}
\def\hh{\mathcal{H}}
\def\hs{\mathcal{S}}
\begin{document}
\title{Hypergraph Tur\'an problem of the generalized triangle with bounded matching number}
\author{
Jian Wang\footnote{Department of Mathematics, Taiyuan University of Technology, Taiyuan 030024, P. R. China. E-mail:wangjian01@tyut.edu.cn. Research supported by NSFC No.12471316 and Natural Science Foundation of Shanxi Province, China No. RD2200004810.} \quad\quad
Wenbin Wang\footnote{Department of Mathematics, Taiyuan University of Technology, Taiyuan 030024, P. R. China. E-mail:wangwb1017@126.com. }\quad\quad
Weihua Yang\footnote{Department of Mathematics, Taiyuan University of Technology, Taiyuan 030024, P. R. China. E-mail:yangweihua@tyut.edu.cn. Research supported by NSFC No.11671296. }\\
 }
\date{}
\maketitle

\begin{abstract}

Let $\mathcal{H}$ be a 3-graph on $n$ vertices. The matching number $\nu(\mathcal{H})$ is defined as the maximum number of disjoint edges in $\mathcal{H}$. The generalized triangle $F_5$ is a 3-graph on the vertex set $\{a,b,c,d,e\}$ with the edge set $\{abc, abd,cde\}$. In this paper, we showed that an $F_5$-free 3-graph $\mathcal{H}$ with matching number at most $s$ has at most $s\lfloor (n-s)^2/4\rfloor$ edges for $n\geq 30(s+1)$ and $s\geq 3$. For the proof, we establish a  2-colored version of Mantel's theorem, which may be of independent interests.

\end{abstract}

\noindent{\bf Keywords:} Tur\'{a}n number; $F_5$-free $ 3 $-graph; matching number.

\section{Introduction}
For $r\geq 2$, an $r$-graph $\mathcal{H}=(V(\mathcal{H}),E(\mathcal{H}))$ consists of a vertex set $V(\mathcal{H})$ and an edge set $E(\mathcal{H}),$ where $E(\mathcal{H})$ is a collection of $r$-element subsets of $V(\mathcal{H})$. For $r\geq 3$, we often identify $\mathcal{H}$ with $E(\mathcal{H})$.   Given a family $\mathcal{F}$  of $r$-uniform hypergraphs, an $r$-graph $\mathcal{H}$ is called {\it  $\mathcal{F}$-free} if it contains no member of $\mathcal{F}$ as a subgraph. The Tur\'{a}n number $\ex_r(n,\ff)$ is defined as the maximum number of edges in an $\ff$-free $r$-graph on $n$ vertices.  Let $T_r(n,\ell)$ denote the {\it  generalized Tur\'{a}n graph}, which is the complete $\ell$-partite $r$-graph on $n$ vertices with $\ell$ partition classes, with each of size either $\lfloor \frac{n}{\ell}\rfloor$ or
$\lceil \frac{n}{\ell}\rceil$. Denote by $t_r(n,\ell)$ the number of edges in $T_r(n,\ell)$. The classical Tur\'{a}n theorem~\cite{turan1941}  states that for $\ell\geq 2$, $\ex_2(n,K_{\ell+1})$ is uniquely achieved by $T_2(n,\ell)$.

 For an $r$-graph $\mathcal{H}$, the {\it matching number} $\nu(\mathcal{H})$ is defined as the maximum number of pairwise disjoint edges in $\mathcal{H}$. Let $M_{s+1}^{r}$ denote a matching of size $s+1$ with each edge being an $r$-set.
 In 1959, Erd\H{o}s-Gallai \cite{EG} determined  the  Tur\'{a}n number of $M_{s+1}^{2}$.

\begin{thm}[\cite{EG}]\label{thm-eg}
For $n\geq 2s+1$, $${\rm ex}_2(n,M_{s+1}^2)= \max\left\{\binom{2s+1}{2},\binom{s}{2}+s(n-s)\right\}.$$
\end{thm}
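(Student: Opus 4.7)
My plan is to prove the lower bound by exhibiting two natural constructions and the upper bound by induction on $s$.

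\emph{Lower bound.} The graph $K_{2s+1}$ together with $n-2s-1$ isolated vertices is $M_{s+1}^2$-free (its matching number is $s$) and contributes the $\binom{2s+1}{2}$ term. The join $K_s\vee\overline{K}_{n-s}$ has $\binom{s}{2}+s(n-s)$ edges and, since every edge meets the $s$-clique, matching number at most $s$.

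\emph{Upper bound.} I induct on $s$, with base case $s=1$ being the classical fact that a graph with no two independent edges is a star or a triangle, giving at most $\max\{3,n-1\}$ edges. For the inductive step, let $G$ be $M_{s+1}^2$-free on $n\ge 2s+1$ vertices. If $\nu(G)\le s-1$ then the induction hypothesis applied to $G$ with parameter $s-1$ yields a bound strictly below the target, since
\[
\binom{s-1}{2}+(s-1)(n-s+1)\le \binom{s}{2}+s(n-s),
\]
so I may assume $\nu(G)=s$. I then split into two subcases according to whether a single vertex deletion can drop the matching number. \emph{Subcase A:} some $v$ satisfies $\nu(G-v)\le s-1$. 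Apply the induction hypothesis to $G-v$ on $n-1$ vertices with parameter $s-1$, and bound $e(G)\le e(G-v)+d(v)\le e(G-v)+(n-1)$. The arithmetic identity
\[
\left[\binom{s}{2}+s(n-s)\right]-\left[\binom{s-1}{2}+(s-1)(n-s)\right]=n-1
\]
shows the sum telescopes exactly to the target $\binom{s}{2}+s(n-s)$ in the linear-in-$n$ regime, while in the small-$n$ regime ($n\le 4s$) the inequality $\binom{2s-1}{2}+(n-1)\le\binom{2s+1}{2}$ takes over. \emph{Subcase B:} $\nu(G-v)=s$ for every $v\in V(G)$, so every vertex is missed by some maximum matching. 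Here I would invoke the Gallai--Edmonds structure theorem: $G$ is then a vertex-disjoint union of factor-critical components of odd orders $n_1,\ldots,n_k$ with $\sum n_i=n$ and $\sum (n_i-1)/2=s$, forcing $k=n-2s$. Then $e(G)\le \sum_i\binom{n_i}{2}$, and convexity (with $\sum n_i=n$ fixed and $k=n-2s$ components, all of odd order at least $1$) concentrates the mass on a single component of order $2s+1$ with the remaining $n-2s-1$ vertices isolated, giving $e(G)\le \binom{2s+1}{2}$.

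The main obstacle I expect is Subcase A: the two pieces of the inductive upper bound must interface correctly with the two pieces of the target envelope, and the crossover points of the two maxima (roughly $n\approx 5s/2$ in the target and a different value in the inductive bound) do not coincide, so the arithmetic has to be tracked carefully at the boundary around $n\approx 4s$. Subcase B is clean once Gallai--Edmonds is accepted as an external input; if one wishes to avoid invoking it, the same conclusion can be reached by a direct Berge-style augmenting-path analysis on the unmatched vertices, trading structural input for casework.
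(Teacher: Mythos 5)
Your argument is correct, but it is a genuinely different route from the paper's: the paper does not prove Theorem~\ref{thm-eg} at all, citing Erd\H{o}s--Gallai and pointing to the short Akiyama--Frankl proof by the shifting (compression) technique, which is also what yields the rainbow variant (Lemma~\ref{Lemma-rainbow matching}) that the paper actually uses later. The shifting route replaces your two subcases by a single reduction: $(i,j)$-shifts do not increase the matching number, and for a shifted graph the bound follows from a short direct count; its advantage is that it generalizes to the rainbow setting essentially verbatim. Your route --- induction on $s$ with vertex deletion, falling back on Gallai--Edmonds when no deletion drops the matching number --- is also sound, and the boundary arithmetic you flag as the main risk does close cleanly: writing $A'=\binom{2s-1}{2}$ and $B'=\binom{s-1}{2}+(s-1)(n-s)$ for the inductive envelope on $n-1$ vertices, one has $B'+(n-1)=\binom{s}{2}+s(n-s)$ exactly, while $A'\ge B'$ forces $n\le \tfrac{5s}{2}\le 4s$ and hence $A'+(n-1)\le\binom{2s+1}{2}$, so $\max\{A',B'\}+(n-1)\le\max\bigl\{\binom{2s+1}{2},\binom{s}{2}+s(n-s)\bigr\}$ in every regime and the two crossover points never conflict. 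Subcase B is also fine as stated, since $\nu(G-v)=\nu(G)$ for all $v$ means every vertex lies in the Gallai--Edmonds set $D(G)$, so $G$ is a disjoint union of $n-2s$ factor-critical (odd) components and convexity gives $e(G)\le\binom{2s+1}{2}$; the price is importing a structure theorem far heavier than anything the shifting proof needs.
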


Using the shifting technique, Akiyama and Frankl \cite{AF} give a short proof of Theorem \ref{thm-eg} and  almost the same proof yields  a rainbow extension of Theorem \ref{thm-eg}.

\begin{lem}[\cite{AF}]\label{Lemma-rainbow matching}
  Let  $G_{1},\ldots,G_{s}\subset\binom{[n]}{2}$ with $n> 2s$. If  $e(G_i)>\max\{\binom{2s-1}{2},\binom{s-1}{2}+(s-1)(n-s+1)\}$ for all $i=1,2,\ldots,s$, then there exist pairwise disjoint edges $e_1\in E(G_1),\ldots,e_s\in E(G_s)$.
\end{lem}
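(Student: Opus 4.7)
The plan is to mimic Akiyama and Frankl's shifting proof of Theorem~\ref{thm-eg}, now in the rainbow setting. The first step is a ``simultaneous shifting'' lemma: for $i<j$, applying the shift $S_{ij}$ to every $G_1,\ldots,G_s$ in parallel preserves each $|E(G_k)|$ and the property that the tuple has no rainbow $s$-matching. Edge-count preservation is coordinate-wise standard. For the matching property, the argument is to lift: given a hypothetical rainbow matching $(e_1',\ldots,e_s')$ in the shifted family, ``unshift'' each $e_k'=\{i,x\}$ lying in $S_{ij}(G_k)\setminus G_k$ to its preimage $\{j,x\}\in G_k$. A short case analysis, using that at most one $e_k'$ contains $i$ and at most one contains $j$ (by disjointness of the $e_k'$), shows the modified edges remain pairwise disjoint, contradicting the assumed absence of a rainbow matching before shifting. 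After iterating over all pairs $i<j$, we may assume each $G_k$ is left-compressed.

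With all $G_k$ compressed, the two hypotheses yield shared structural information. The bound $e(G_k)>\binom{s-1}{2}+(s-1)(n-s+1)$ forces some edge of $G_k$ to avoid $[s-1]$; compression collapses this edge to $\{s,s+1\}\in G_k$, and downward closure then places $\binom{[s+1]}{2}$ inside every $G_k$. The bound $e(G_k)>\binom{2s-1}{2}$ forces some edge of $G_k$ to have a vertex in $\{2s,\ldots,n\}$; compression collapses this edge to $\{1,2s\}\in G_k$, and downward closure then places $\{\{1,j\}:2\le j\le 2s\}$ inside every $G_k$. I would then argue by induction on $s$, the base $s=1$ being immediate. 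For the inductive step, take $e_s=\{1,2s\}\in G_s$ and apply the induction hypothesis to $(G_1,\ldots,G_{s-1})$ restricted to $[n]\setminus\{1,2s\}$. Compression gives the degree monotonicity $d_{G_k}(1)\ge d_{G_k}(2)\ge\cdots\ge d_{G_k}(n)$, which controls the edge loss when restricting, and the gap $n+s-3$ between the thresholds for parameters $s$ and $s-1$ provides the slack needed (using $n>2s$) to propagate the hypothesis.

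The main obstacle I expect is the simultaneous shifting lemma: the rainbow constraint couples the colour classes, so an unshifted edge $\{j,x\}$ in one class can clash with an edge containing $j$ in another, and resolving this requires a careful exchange argument rather than the single swap that suffices in the one-graph case. A secondary technical point is the inductive arithmetic, where one must verify that the edge loss $d_{G_k}(1)+d_{G_k}(2s)$ stays within the gap $n+s-3$ for every $k<s$; in potentially degenerate cases where some $G_k$ is almost complete this bound can fail, but then a rainbow matching is straightforward to find directly, so such cases can be treated separately and the argument still closes.
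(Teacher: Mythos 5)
The paper offers no proof of this lemma: it is quoted from Akiyama--Frankl with the remark that their shifting proof of Theorem~\ref{thm-eg} extends to the rainbow setting, so your shifting-based plan is exactly the intended route. Your first two components are sound: simultaneous shifting of all the $G_k$ preserves the edge counts and the absence of a rainbow $s$-matching (your exchange in the one nontrivial case --- a lifted edge $\{j,x\}$ clashing with an edge $e_m'=\{j,y\}$ of another colour --- is resolved by noting that $\{j,y\}$ survives the shift only if $\{i,y\}\in G_m$ as well, so one swaps $e_m'$ for $\{i,y\}$), and the two structural consequences $\binom{[s+1]}{2}\subseteq G_k$ and $\{1,j\}\in G_k$ for $2\le j\le 2s$ follow correctly from the two thresholds plus left-compression.

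The genuine gap is in the inductive step. Removing $e_s=\{1,2s\}$ costs each $G_k$ exactly $\deg_{G_k}(1)+\deg_{G_k}(2s)-1$ edges, and you need this to stay within the slack $n+s-3$ between the linear thresholds for $(n,s)$ and $(n-2,s-1)$; that forces $\deg_{G_k}(1)+\deg_{G_k}(2s)\le n+s-2$. But after shifting, $\deg_{G_k}(1)=n-1$ is the generic situation (it holds as soon as $G_k$ has any edge meeting the last vertex), and then you need $\deg_{G_k}(2s)\le s-1$, which fails already for graphs barely above the extremal configuration $\{e:e\cap[s-1]\ne\emptyset\}$ --- e.g.\ a shifted $G_k$ containing $\{s,2s\}$ has $\deg_{G_k}(2s)\ge s$. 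These are nowhere near complete, so your ``degenerate case'' escape (reserved for almost-complete $G_k$) does not cover them; what actually happens is that a larger degree at $2s$ forces correspondingly more edges, and quantifying that trade-off is precisely the content that is missing. A second, independent problem is that your gap computation only addresses the linear threshold: when $n$ is close to $2s$ the binding term is $\binom{2s-1}{2}$, whose drop to $\binom{2s-3}{2}$ is only $4s-5$, far below any degree-based loss, so that regime needs a separate dense-case argument which the proposal does not supply.

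Both issues disappear if you delete a single vertex instead of an edge. After shifting, either every $G_i$ has $\deg_{G_i}(n)=0$, in which case delete the vertex $n$ and induct on $n$ (handling $n=2s+1$ directly, where each $G_i$ misses only $O(s)$ pairs and a counting argument over near-perfect matchings finishes), or some $G_i$, say $G_s$, contains $\{1,n\}$ and hence satisfies $\deg_{G_s}(1)=n-1$. Deleting the vertex $1$ costs each $G_k$ at most $n-1$ edges, while one checks that
\[
\max\left\{\binom{2s-1}{2},\binom{s-1}{2}+(s-1)(n-s+1)\right\}-\max\left\{\binom{2s-3}{2},\binom{s-2}{2}+(s-2)(n-s)\right\}\ \ge\ n-1
\]
for all $n>2s$, so the strict hypothesis survives for $G_1-1,\ldots,G_{s-1}-1$ on $n-1>2(s-1)$ vertices; the resulting rainbow $(s-1)$-matching covers $2s-2<n-1$ vertices of $[2,n]$ and extends by an edge $\{1,x\}\in G_s$ at an uncovered $x$. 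This closes the induction with no case analysis on degrees.
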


Let $G(n,\ell,s)$ denote the complete $\ell$-partite graph on $n$ vertices, with one of the partite sets  being of size $n-s$ and the other $\ell-1$ of them inducing a copy of $T_2(s,\ell-1)$. Let $g(n,\ell,s)$ denote the number of edges in $G(n,\ell,s)$.

In 2024, Alon and Frankl \cite{AlonFrankl} determined  the maximum number of  edges in a graph with bounded clique number and bounded matching number.

\begin{thm}[\cite{AlonFrankl}]\label{thm-AF24}
	For $n\geq 2s+1, \ell\geq 2.$
	\[
	\ex_2(n,\{K_{\ell+1},M_{s+1}^2\}) = \max\{t_2(2s+1,\ell),g(n,\ell,s)\}.
	\]
\end{thm}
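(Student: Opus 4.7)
The plan is to prove the two inequalities separately: the lower bound by exhibiting both extremal constructions, and the upper bound by a structural analysis built around a maximum matching.

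For the lower bound, I would verify that each candidate graph is admissible. The Tur\'{a}n graph $T_2(2s+1,\ell)$ padded with $n-2s-1$ isolated vertices is $K_{\ell+1}$-free by Tur\'{a}n's theorem and has matching number at most $s$ because only $2s+1$ of its vertices are non-isolated. The graph $G(n,\ell,s)$ is $\ell$-partite (hence $K_{\ell+1}$-free), and each matching is forced to use at most the $s$ vertices on its ``kernel'' side, giving $\nu\leq s$. Hence $\ex_2(n,\{K_{\ell+1},M_{s+1}^2\})\geq \max\{t_2(2s+1,\ell),g(n,\ell,s)\}$.

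For the upper bound, let $G$ be an extremal $\{K_{\ell+1},M_{s+1}^2\}$-free graph. If $n\leq 2s+1$ then Tur\'{a}n gives $e(G)\leq t_2(n,\ell)\leq t_2(2s+1,\ell)$, so assume $n\geq 2s+2$. Fix a maximum matching $M$ of size $m=\nu(G)\leq s$, write $V(M)=\{x_1,y_1,\ldots,x_m,y_m\}$ and $B=V(G)\setminus V(M)$; $B$ is independent by maximality. A standard augmenting-path argument on each edge $x_iy_i$ shows $\min\{d_B(x_i),d_B(y_i)\}\leq 1$, with the two $B$-neighbors coinciding if both degrees equal $1$. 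Relabeling so that $d_B(y_i)\leq 1$ and setting $X=\{x_i\}$, $Y=\{y_i\}$ yields the basic estimate $e(X,B)+e(Y,B)\leq m(n-2m)+m$. The structural heart of the argument is to show that $G[X]$ is $K_\ell$-free, because any $\ell$-clique inside $X$ together with a common $B$-neighbor would produce a forbidden $K_{\ell+1}$; this yields $e(G[X])\leq t_2(m,\ell-1)$. Additional augmenting-path arguments on edges inside $Y$ and between different pairs (for example, $y_iy_j\in E(G)$ forces at most one of $d_B(x_i),d_B(x_j)$ to exceed $1$) further restrict $G[V(M)]$, and careful bookkeeping in the case $m=s$ yields $e(G)\leq g(n,\ell,s)$. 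The case $m\leq s-1$ is handled by Theorem \ref{thm-eg}: the Erd\H{o}s-Gallai bound for $M_s^2$-free graphs is strictly below $g(n,\ell,s)$ whenever $n\geq 2s+1$.

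The main obstacle is justifying the common $B$-neighbor for an alleged $K_\ell$ inside $X$, since large individual $d_B(x_i)$ do not automatically produce a common neighbor. I would resolve this either by an extremality/shifting argument in the spirit of the Akiyama-Frankl proof of Lemma \ref{Lemma-rainbow matching} --- replacing non-dominating heavy vertices by $B$-dominating ones without decreasing the edge count or introducing forbidden configurations --- or by a finer case analysis using the augmenting-path constraints derived above to force non-dominating $x_i$'s to contribute few edges. The boundary regime where $t_2(2s+1,\ell)$ is the binding maximum should be dispatched by a direct reduction to Tur\'{a}n on a $(2s+1)$-vertex subgraph.
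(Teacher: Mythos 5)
This is Theorem \ref{thm-AF24}, which the paper imports from Alon--Frankl without proof, so there is no in-paper argument to compare against; I am judging your proposal on its own. Your lower bound is fine, and the augmenting-path facts you list ($B$ independent, $\min\{d_B(x_i),d_B(y_i)\}\leq 1$ with coinciding neighbors when both equal $1$) are correct. But the proof has a genuine gap exactly where you flag it, and the crude estimate you do establish, $e(G)\leq t_2(2m,\ell)+m(n-2m)+m$, is not sufficient: already for $\ell=2$ and $m=s$ it gives $s^2+s(n-2s)+s$, which exceeds $g(n,2,s)=s(n-s)$ by $s$. So the entire burden of the upper bound rests on the unproven structural claim that the ``heavy'' vertices $X$ span a $K_\ell$-free graph (and on further unstated restrictions on $e(X,Y)$, $e(Y)$, $e(Y,B)$). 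The common-neighbor obstruction you name is real --- the $x_i$ can have pairwise disjoint $B$-neighborhoods --- and neither proposed remedy is carried out; shifting is particularly delicate here because it must simultaneously preserve $K_{\ell+1}$-freeness and the matching bound. In addition, your disposal of the case $m\leq s-1$ via Theorem \ref{thm-eg} is false as stated: for $\ell=2$, $s=4$, $n=9$ the Erd\H{o}s--Gallai bound for $\nu\leq 3$ is $\max\{\binom{7}{2},\binom{3}{2}+3\cdot 6\}=21$, while $\max\{t_2(9,2),g(9,2,4)\}=20$. The correct move there is induction on $s$, using that $\max\{t_2(2s-1,\ell),g(n,\ell,s-1)\}\leq\max\{t_2(2s+1,\ell),g(n,\ell,s)\}$. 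Finally, ``reduce to Tur\'an on a $(2s+1)$-vertex subgraph'' in the regime where $t_2(2s+1,\ell)$ is the binding term is not an argument: nothing a priori confines the edges of $G$ to $2s+1$ vertices.

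For orientation: the published proof avoids the maximum-matching decomposition altogether. By the Berge--Tutte deficiency formula one picks $B\subset V(G)$ with $o(G-B)-|B|=n-2\nu(G)$; then one bounds the edges inside $B$ and from $B$ to the rest trivially, bounds the edges inside each component of $G-B$ by Tur\'an's theorem, and uses convexity of $t_2(\cdot,\ell)$ to show the optimum is attained either when all components are singletons (giving $g(n,\ell,s)$) or when one component absorbs everything (giving $t_2(2s+1,\ell)$). This sidesteps the common-neighbor issue entirely, and is the route I would recommend if you want a complete proof; your matching-based plan can likely be pushed through for $n>3s$ (as in the Fu--Wang--Yang reference \cite{FWY}), but not with the steps currently written.
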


Let us mention that the case $n>3s$ of Theorem \ref{thm-AF24} was also obtained in \cite{FWY}.

A {\it weakly independent set} of an $r$-graph $\hh$ is a subset of $V(\hh)$ that does not contain any edge of $\hh$.  A proper $k$-coloring of $\hh$ is a mapping from $V(\hh)$ to a set of $k$ colors such that no edge of $\hh$ is monochromatic. The {\it chromatic number} $\chi(\hh)$ is defined as  the minimum number of colors needed for proper coloring $\hh$.

Recently, Gerbner, Tompkins and  Zhou \cite{GTZ} considered the analogous Tur\'{a}n problems for hypergraphs with bounded matching number.

\begin{thm} [\cite{GTZ}]
If $\chi(F)>2$ and $n$ is sufficiently large, then
\[
\ex_r(n,\{F,M_{s+1}^{r}\}) =\ex_r(s,\mathcal{F}) +\sum_{1\leq i\leq t}\binom{s}{i} \binom{n-s}{r-i},
\]
where $t=\min\{s,r-1\}$ and $\mathcal{F}$ is the family of $r$-graphs obtained by deleting a weakly independent set from $F$.
\end{thm}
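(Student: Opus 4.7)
The lower bound is realized by the natural construction: fix $S\subseteq[n]$ with $|S|=s$, place on $S$ an $\mathcal{F}$-free $r$-graph with $\ex_r(s,\mathcal{F})$ edges, and add every $r$-subset whose intersection with $S$ has size in $\{1,\dots,t\}$. Every edge meets $S$, so $\nu\leq s$; and in any embedded copy of $F$, the vertices in $V\setminus S$ carry no edge and hence form a weakly independent set $W$, forcing $F-W\in\mathcal{F}$ to embed into the $\mathcal{F}$-free hypergraph on $S$, a contradiction.

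For the upper bound, let $\mathcal{H}$ be $\{F,M_{s+1}^r\}$-free on $n$ vertices with $e(\mathcal{H})$ at least the claimed count. The plan is to isolate an $s$-element core $S$ and then show the construction is essentially optimal. Starting from a maximum matching, whose vertex set $U$ has $|U|\leq rs$ and covers every edge, set $S:=\{v:d_{\mathcal{H}}(v)\geq c n^{r-1}\}$ for a small constant $c=c(F,s)>0$. If $|S|\leq s-1$, then $e(\mathcal{H})\leq\sum_{v\in U}d(v)\leq (s-1)\binom{n-1}{r-1}+rs\cdot cn^{r-1}$, which falls below the target for $c$ small enough, contradicting the edge-count assumption. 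If $|S|\geq s+1$, the $(r-1)$-uniform links $L_{v}$ of any $s+1$ heavy vertices each carry $\Omega(n^{r-1})$ edges, and a rainbow-matching argument in the spirit of Lemma~\ref{Lemma-rainbow matching} (applied through shadows for $r\geq 3$) produces $s+1$ pairwise disjoint edges of $\mathcal{H}$, contradicting $M_{s+1}^r$-freeness. Hence $|S|=s$ exactly.

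Writing $T=V(\mathcal{H})\setminus S$, the edges of $\mathcal{H}$ split into those in $\binom{S}{r}$, crossing edges with $1\leq|e\cap S|\leq r-1$, and those in $\binom{T}{r}$; the crossing count is at most $\sum_{1\leq i\leq t}\binom{s}{i}\binom{n-s}{r-i}$ trivially. It remains to prove (i) $\binom{T}{r}\cap E(\mathcal{H})=\emptyset$ and (ii) $\mathcal{H}[S]$ is $\mathcal{F}$-free, so $|\mathcal{H}[S]|\leq \ex_r(s,\mathcal{F})$. Both are proved by the same absorbing scheme: given an offending edge $e\subset T$ (for (i)) or a copy of some $F'=F-W\in\mathcal{F}$ inside $\mathcal{H}[S]$ (for (ii)), we build a forbidden copy of $F$ in $\mathcal{H}$ by greedily placing the remaining vertices of $F$ into $T$ at the high-degree neighborhoods of the appropriate $v\in S$. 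The hypothesis $\chi(F)>2$ enters at exactly this point: in any embedding of $V(F)$ along the bipartition $S$ versus $T$ some edge of $F$ must be monochromatic, and this monochromatic edge is identified either with the initial $e\subset T$ (case (i)) or with an edge of $\mathcal{H}[S]$ realizing $F'$ (case (ii)), making the embedding consistent.

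The main obstacle is the bookkeeping of this embedding. One must simultaneously realize each edge of $F$ within $\mathcal{H}$, keep all chosen $T$-vertices pairwise distinct, and avoid the $O(1)$ already-fixed vertices; this requires iterating a rainbow-matching selection across the links $L_v$, $v\in S$, each step consuming at most $|V(F)|$ vertices and leaving each $L_v$ of size $\Omega(n^{r-1})$, which is valid for $n\geq n_0(F,s)$. Once (i) and (ii) are established, summing the three contributions yields exactly $\ex_r(s,\mathcal{F})+\sum_{1\leq i\leq t}\binom{s}{i}\binom{n-s}{r-i}$.
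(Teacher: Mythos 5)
First, a point of comparison: this theorem is quoted from \cite{GTZ} and the paper you were given contains no proof of it, so there is no internal argument to measure yours against; I am judging the proposal on its own merits.

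Your lower-bound construction and the degree-based identification of the $s$-set $S$ are sound, and both the case $|S|\geq s+1$ and the claim $\mathcal{H}[T]=\emptyset$ follow from the clean greedy matching argument (no copy of $F$ is needed for (i)). The genuine gap is in step (ii), the assertion that $\mathcal{H}[S]$ is $\mathcal{F}$-free. To promote a copy of $F'=F-W\in\mathcal{F}$ found in $\mathcal{H}[S]$ to a copy of $F$ you must simultaneously realize every edge of $F$ meeting $W$; these are \emph{specific} crossing $r$-sets, typically containing two or more prescribed vertices of $S$, and ``high-degree neighborhoods of the appropriate $v\in S$'' give no control over them: $\deg(v)\geq cn^{r-1}$ says nothing about the codegree of a pair of vertices of $S$, which may well be zero. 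Indeed the bare claim is false for a general $\{F,M_{s+1}^{r}\}$-free $\mathcal{H}$ with this structure, since one may delete enough crossing $r$-sets to block every extension while keeping $\mathcal{H}[S]$ complete. What rescues the theorem is a counting dichotomy your write-up never invokes: because $|\mathcal{H}[S]|\leq\binom{s}{r}=O_{s}(1)$, the standing hypothesis that $e(\mathcal{H})$ meets the target, together with $\mathcal{H}[T]=\emptyset$, forces the number of \emph{missing} crossing $r$-sets to be at most $|\mathcal{H}[S]|-\ex_r(s,\mathcal{F})=O_{s}(1)$; only then does a count over the $\Theta(n^{|W|})$ placements of $W$ (each missing crossing set ruling out $O(n^{|W|-1})$ of them) yield a valid embedding of $F$ and the desired contradiction. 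A secondary issue is where you locate the hypothesis $\chi(F)>2$: its essential role is to guarantee that no member of $\mathcal{F}$ is edgeless (equivalently, that $V(F)$ admits no partition into two weakly independent sets), which is precisely what makes the lower-bound construction $F$-free --- your verification there silently assumes $F-W$ has an edge --- whereas the ``monochromatic edge identified with $e\subset T$'' device is neither needed for (i) nor a legitimate embedding argument, since realizing a single edge of $F$ does not realize $F$.
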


The generalized triangle $F_5$ is a 3-graph on the vertex set $\{a,b,c,d,e\}$ with the edge set $\{abc, abd,cde\}$. It is easy to check that $\chi(F_5)=2$. However, unlike the graph case, $\chi(F_5)=2$ does not mean that $F_5$ is degenerate. As proved by Frankl and F\"{u}redi \cite{FranklFuredi}, $F_5$ has positive Tur\'{a}n density.

\begin{thm}[\cite{FranklFuredi}]\label{thm-FF}
	For $n\geq 3000$,
	\begin{align}\label{ineq-turanF5}
	\ex_3(n,F_5) = \left\lfloor \frac{n}{3}\right\rfloor \left\lfloor \frac{n+1}{3}\right\rfloor \left\lfloor \frac{n+2}{3}\right\rfloor.
	\end{align}
\end{thm}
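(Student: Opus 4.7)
The plan is to treat the lower and upper bounds separately.

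\textbf{Lower bound.} The extremal construction is $T_3(n,3)$, the complete balanced 3-partite 3-graph on $n$ vertices with parts $V_1, V_2, V_3$, which has exactly $\lfloor n/3\rfloor\lfloor(n+1)/3\rfloor\lfloor(n+2)/3\rfloor$ edges. It is $F_5$-free: every edge uses one vertex from each part, so if $\{a,b,c\}$ and $\{a,b,d\}$ share the pair $\{a,b\}$ with $a\in V_1$, $b\in V_2$, then $c,d\in V_3$; no edge can then contain both $c$ and $d$, since they lie in the same part.

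\textbf{Structural observation.} For the upper bound, the key consequence of $F_5$-freeness is: if $uvy, uvz\in\mathcal{H}$ with $y\neq z$, then any edge of $\mathcal{H}$ containing $\{y,z\}$ must equal $\{u,y,z\}$ or $\{v,y,z\}$. Indeed, otherwise the edges $\{u,v,y\}$, $\{u,v,z\}$, $\{y,z,w\}$ with $w\notin\{u,v\}$ would form a copy of $F_5$ on five distinct vertices. In particular, whenever $\{u,v\}$ has codegree at least $2$, any pair $\{y,z\}$ in its link satisfies $d_\mathcal{H}(yz)\leq 2$.

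\textbf{Main strategy.} I would combine this observation with induction on $n$. At each step, remove a vertex $v$ of minimum degree, so $d(v)\leq 3|\mathcal{H}|/n$; the task is to push this to $d(v)\leq t_3(n,3)-t_3(n-1,3)\approx n^2/9$, at which point applying the induction hypothesis to $\mathcal{H}-v$ finishes the step. Thus the problem reduces to bounding the link graph $L(v)=\{xy: vxy\in\mathcal{H}\}$, whose edge count equals $d(v)$. The structural observation gives strong constraints on $L(v)$, but triangles in $L(v)$ remain possible (for instance, from the three edges $\{v,x,y\},\{v,x,z\},\{v,y,z\}$), so Mantel's theorem cannot be applied to $L(v)$ directly.

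\textbf{Main obstacle.} The crucial technical step is sharpening the local codegree constraints into the bound $e(L(v))\leq \lfloor n/3\rfloor\lceil n/3\rceil\approx n^2/9$, which is considerably smaller than the $n^2/4$ afforded by triangle-freeness alone. Frankl and F\"uredi accomplish this via a delta-system argument with extensive case analysis on intersection patterns of edges of $\mathcal{H}$, essentially forcing any near-extremal $F_5$-free hypergraph to be 3-partite. A modern alternative is a stability-based approach: first show that any $\mathcal{H}$ with $|\mathcal{H}|\geq (1-\varepsilon)t_3(n,3)$ is structurally close to some $T_3(n,3)$, and then bootstrap from approximate to exact 3-partiteness via vertex-swapping, exploiting that any non-tripartite edge creates a deficit that outweighs its gain. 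This global structural step, passing from local codegree information to 3-partite geometry, is where I expect the bulk of the effort to lie.
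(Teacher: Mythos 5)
This statement is Frankl and F\"uredi's theorem, quoted in the paper with a citation to \cite{FranklFuredi}; the paper offers no proof of it, so there is nothing internal to compare against. Judged on its own terms, your proposal is not a proof but an outline with the decisive step missing. The lower bound and the structural observation (codegree of any pair inside the link of a pair of codegree $\geq 2$ is at most $2$) are correct, and you correctly diagnose that triangle-freeness of links is unavailable and that the real work is forcing $e(L(v))\lesssim n^2/9$ rather than $n^2/4$. But you then explicitly defer that step ("this is where I expect the bulk of the effort to lie"), naming the delta-system method and a stability approach without executing either. Since the upper bound is the entire content of the theorem, this is a genuine gap, not a stylistic omission.

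Two further points in the inductive framework would need attention even if the link bound were supplied. First, as you half-acknowledge, minimum degree $\leq 3|\mathcal{H}|/n$ does not by itself reach the required threshold $t_3(n,3)-t_3(n-1,3)=\lfloor n/3\rfloor\lfloor (n+1)/3\rfloor$ in all residue classes of $n$ modulo $3$, so the induction step cannot be closed by averaging alone; one must argue directly about the structure of a minimum-degree link. Second, the induction needs a base case at large $n$: the claimed formula is false for small $n$ (already the full star $\mathcal{S}_x$ is $F_5$-free with $\binom{n-1}{2}$ edges, which exceeds $\lfloor n/3\rfloor\lfloor(n+1)/3\rfloor\lfloor(n+2)/3\rfloor$ for $n\leq 13$), so the induction must bottom out at some threshold where the statement is established by a separate, non-inductive argument --- which is essentially where the $n\geq 3000$ hypothesis comes from. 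Your sketch does not address this.
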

Later,  the condition $n\geq 3000$ was improved to $n\geq 33$ by Keevash and Mubayi \cite{PeterMubayi}.

 Let $\mathcal{H}_{3}(n,s)$ denote the complete $3$-partite $3$-graph on $n$ vertices with one partite set of size $s$ and the other two partite sets of sizes  $\lfloor\frac{n-s}{2}\rfloor$ and $\lceil\frac{n-s}{2}\rceil$.  Clearly  $|\mathcal{H}_{3}(n,s)|=s \lfloor\frac{(n-s)^{2}}{4}\rfloor$. It is easy to check that $\mathcal{H}_{3}(n,s)$ is $F_5$-free and has matching number at most $s$. For $x\in [n]$, define the {\it full star of center $x$} as the 3-graph $\hs_x=\{E\in \binom{[n]}{3}\colon x\in E\}$. Clearly, $\hs_x$ is $\{F_{5},M_{2}^{3}\}$-free.

 In this paper, we show that $\mathcal{H}_{3}(n,s)$ attains the maximum number of edges among all $n$-vertex $F_5$-free 3-graphs with matching number $s$ for $n\geq 30(s+1)$ and $s\geq 3$.

\begin{thm}\label{main1}
	For $n\geq 30(s+1)$,
	\[
	\ex_3(n,\{F_{5},M_{s+1}^{3}\}) = \left\{
                \begin{array}{ll}
                   \binom{n-1}{2}, & \mbox{ if } s=1,2; \\[5pt]
                   s \left\lfloor\frac{(n-s)^{2}}{4}\right\rfloor, & \mbox{ if } s\geq 3.
                \end{array}
              \right.
	\]
Moreover, for $s=1,2$ the full star is  the unique $\{F_{5},M_{s+1}^{3}\}$-free 3-graph attaining the maximum size and for $s\geq 3$, $\mathcal{H}_{3}(n,s)$ is the unique $\{F_{5},M_{s+1}^{3}\}$-free 3-graph attaining the maximum size up to isomorphism.
\end{thm}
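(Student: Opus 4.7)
\emph{Construction and the cases $s=1,2$.} The graph $\mathcal{H}_{3}(n,s)$ is $3$-partite, hence cannot contain $F_5$: if $\{abc,abd,cde\}\subseteq \mathcal{H}_{3}(n,s)$, then $abc$ and $abd$ share the pair $\{a,b\}$, forcing $c,d$ to lie in the same part, which contradicts $cde$ being a $3$-partite edge. The smallest part has size $s$, so $\nu(\mathcal{H}_{3}(n,s))\leq s$, and clearly $|\mathcal{H}_{3}(n,s)|=s\lfloor(n-s)^2/4\rfloor$. The cases $s=1,2$ reduce to the classical statement that an intersecting $3$-graph on $n$ vertices has at most $\binom{n-1}{2}$ edges, so I focus on $s\geq 3$.

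\emph{Setup and the spine.} Let $\mathcal{H}$ be an $n$-vertex $F_5$-free $3$-graph with $\nu(\mathcal{H})\leq s$; write $L_v=\{\{a,b\}:vab\in\mathcal{H}\}$ and $d(v)=|L_v|$. The plan is to find a spine $S\subseteq V(\mathcal{H})$ with $|S|\leq s$ that essentially covers all edges. First I take a maximum matching $M$ of size $t\leq s$; by maximality every edge meets $V(M)$, so $|\mathcal{H}|\leq\sum_{v\in V(M)}d(v)$. Call $v$ \emph{heavy} if $d(v)>c(n-s)^2$ for a small absolute constant $c$, and let $S$ be the set of heavy vertices. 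To see $|S|\leq s$, I argue by contradiction: given $s+1$ heavy vertices $v_1,\ldots,v_{s+1}$, the link graphs $L_{v_i}$ restricted to $V\setminus\{v_1,\ldots,v_{s+1}\}$ still have enough edges (since $n\geq 30(s+1)$) to invoke Lemma~\ref{Lemma-rainbow matching} and obtain pairwise disjoint edges $e_i\in L_{v_i}$; then $\{\{v_i\}\cup e_i\}_{i=1}^{s+1}$ is a matching of size $s+1$ in $\mathcal{H}$, contradicting $\nu(\mathcal{H})\leq s$.

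\emph{The 2-colored Mantel step.} For distinct $u,v\in S$ the $F_5$-free condition yields a key cross-constraint: if $xy,xz\in L_u$ form a cherry at $x$ and $yz\in L_v$, with $u,v,x,y,z$ all distinct, then $\{uxy,uxz,vyz\}$ is a copy of $F_5$. The auxiliary 2-colored Mantel theorem that I intend to establish as a separate lemma says that any two graphs $G_1,G_2$ on an $n'$-vertex set satisfying this reciprocal ``no cross-cherry closing'' condition obey $e(G_1)+e(G_2)\leq 2\lfloor (n')^2/4\rfloor$, with equality only when $G_1=G_2$ is a balanced complete bipartite graph. Applied to the pair $(L_u,L_v)$ on $V\setminus S$ for every $\{u,v\}\in\binom{S}{2}$ and summed over pairs, this gives $(s-1)\sum_{v\in S}d(v)\leq 2\binom{s}{2}\lfloor(n-s)^2/4\rfloor$, hence $|\mathcal{H}|\leq s\lfloor(n-s)^2/4\rfloor$ after a careful accounting of edges that meet $S$ in more than one vertex or avoid $S$ altogether.

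\emph{Main obstacle and uniqueness.} The hardest part is the 2-colored Mantel lemma itself. Mantel's classical argument uses $N(u)\cap N(v)=\emptyset$ for every edge $uv$ of a triangle-free graph, but here each individual $L_v$ may well contain triangles; only the cross-condition $N_{G_1}(u)\cap N_{G_1}(w)=\emptyset$ for $uw\in G_2$ (together with its symmetric version) is forced, yielding partial degree bounds $d_{G_1}(u)+d_{G_1}(w)\leq n'$ only on edges of $G_2$. Turning these partial bounds into the uniform inequality $e(G_1)+e(G_2)\leq 2\lfloor (n')^2/4\rfloor$ via Cauchy--Schwarz or a Zykov-style symmetrization is where the technical weight of the proof lies. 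Uniqueness of the extremal graph in Theorem~\ref{main1} then follows by tracing equality through every step: $|S|=s$, every $L_v$ for $v\in S$ must coincide with the same $K_{A,B}$ for a balanced bipartition $\{A,B\}$ of $V\setminus S$, and no other edges are possible --- precisely the structure of $\mathcal{H}_{3}(n,s)$.
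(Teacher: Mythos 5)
Your outline follows the same strategy as the paper (a spine of at most $s$ high-degree vertices, the $F_5$-induced ``no cross-cherry'' condition on their links, and a $2$-colored Mantel theorem), but as it stands it has several genuine gaps, two of which are fatal to the argument rather than routine.

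First, the case $s=2$ does not reduce to the Erd\H{o}s--Ko--Rado bound for intersecting families: a $3$-graph with $\nu(\mathcal{H})\le 2$ need not be intersecting, and handling it occupies an entire section of the paper (a case analysis on a near-dominating vertex, using Hilton--Milner and the $2$-colored Mantel theorem). Second, and more importantly, the ``careful accounting of edges that meet $S$ in more than one vertex or avoid $S$ altogether'' is precisely where the difficulty of the $s\ge 3$ case lies, and your sketch does not close it. The term $\sum_{\{u,v\}\in\binom{S}{2}}\deg(u,v)$ can a priori be as large as $\binom{s}{2}(n-2)$, and the leftover edges covered only by the low-degree vertices of $V(M)\setminus S$ contribute up to $3s\cdot c(n-s)^2$; adding either quantity to $s\lfloor(n-s)^2/4\rfloor$ overshoots the target. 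The paper absorbs these terms by showing that whenever two spine vertices $x,y$ are joined by an edge of $\mathcal{H}$ and their links share an edge, then $\deg(x,y)\le 2$; and, when two heavy links are \emph{disjoint}, by a separate stability-type theorem (Theorem~\ref{thm-disjoint}) showing the total link sum drops by $\Omega((s-2)(n-s)^2)$. Without some version of these two facts your pairwise averaging $(s-1)\sum_{v\in S}\deg(v,\bar S)\le 2\binom{s}{2}\lfloor(n-s)^2/4\rfloor$ leaves no slack to pay for the extra edges. Finally, the $2$-colored Mantel lemma itself --- which you correctly identify as the technical heart and explicitly leave unproved --- is not a routine modification of Mantel's argument; the paper proves it by induction on $n$, splitting according to whether some edge $xy$ has $\deg(x)+\deg(y)>n$, and using the observation that a monochromatic triangle in $G_i$ forces $\deg_{G_i}(x)+\deg_{G_i}(y)+\deg_{G_j}(x)+\deg_{G_j}(y)\le 2n-2$ for $j\ne i$. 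Since both the key lemma and the balancing of the error terms are missing, the proposal is an accurate road map of the paper's approach but not yet a proof.
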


Let $G_1,G_2,\ldots,G_s$ be simple graphs on the same vertex set $V$. Define an {\it edge-colored multiple graph}  $G_1+G_2+\cdots+G_s$  as a multiple graph on the vertex set $V$ with edges in $E(G_i)$ having color $i$ for $i=1,2,\ldots,s$.  We call an edge-colored graph \textit{rainbow} if no two edges have the same color. We say that $G$ is {\it rainbow $H$-free} if  it does not contain a rainbow $H$ in $G$. For $G=G_1+G_2+\cdots+G_s$, we also say that $G_1,G_2,\ldots,G_s$ are rainbow $H$-free.

Frankl \cite{rainbow triangle} proved a rainbow version of Mantel's Theorem.

\begin{thm}[\cite{rainbow triangle}]\label{rainbow triangle}
	Let $p\geq 3$ and let $G_{1},\ldots,G_{p}\subset\binom{[n]}{2}$ be  rainbow triangle free. Then
	\[
	\sum_{i=1}^{p}e(G_i)\leq \max \left\{\,2\binom{n}{2} ,\, p\left\lfloor\frac{n^2}{4}\right\rfloor\right\}.
	\]
\end{thm}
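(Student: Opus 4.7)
The plan is to split the proof according to whether the union graph $G^\ast := G_1 \cup G_2 \cup \cdots \cup G_p$ (as a simple graph on $[n]$) is triangle-free. Write $C(e) := \{i : e \in G_i\}$ for the set of colors on an edge $e \in G^\ast$, so that $\sum_{i=1}^p e(G_i) = \sum_{e \in G^\ast} |C(e)|$.

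\textit{Case 1: $G^\ast$ is triangle-free.} By Mantel's theorem, $e(G^\ast) \le \lfloor n^2/4 \rfloor$. Since $|C(e)| \le p$ for every edge $e \in G^\ast$,
\[
\sum_{i=1}^p e(G_i) \;=\; \sum_{e \in G^\ast} |C(e)| \;\le\; p \cdot e(G^\ast) \;\le\; p\left\lfloor\frac{n^2}{4}\right\rfloor,
\]
which is within the stated bound.

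\textit{Case 2: $G^\ast$ contains a triangle.} The structural input is the following Hall-failure characterization of rainbow-triangle-freeness: for every triangle $\{u,v,w\}$ in $G^\ast$, the three sets $C(uv), C(uw), C(vw)$ must fail Hall's marriage condition, so either (a) two of them coincide as singletons, or (b) their union has size at most $2$. To convert this into a global bound, I would first apply the standard shifting (compression) operator $S_{ab}$ (for $1 \le a < b \le n$) simultaneously to each $G_i$: this preserves each $e(G_i)$ and, via a careful verification, preserves rainbow-triangle-freeness. After iterating the shifts every $G_i$ is left-compressed, and in particular $\{1,2,3\}$ can be taken to be a triangle of $G^\ast$. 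Cascading the Hall-failure condition from $\{1,2,3\}$ outwards through the triangles $\{1,2,x\}, \{1,3,x\}, \{2,3,x\}$ for $x \ge 4$ (and further triangles iteratively) would yield structural control: on the one hand, ``heavy'' edges (those with $|C(e)|$ large) are forced to sit on vertices whose other incident edges are singleton-colored; on the other hand, ``light'' edges propagate the two-color restriction. A careful double count of $\sum_{e \in G^\ast} |C(e)|$ against these structural restrictions yields the bound $\max\{2\binom n 2,\, p\lfloor n^2/4\rfloor\}$.

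\textit{Main obstacle.} The hard work lies in Case 2. The Hall-failure condition is purely local to each triangle, and aggregating it globally is subtle because the target bound has two regimes: $2\binom n 2$ dominates for small $p$ (extremal: $G_1 = G_2 = K_n$, others empty), while $p\lfloor n^2/4\rfloor$ dominates for large $p$ (extremal: each $G_i = T_2(n,2)$), and intermediate configurations---such as two edges colored in all $p$ classes together with a matching of singleton-colored edges---realise sums strictly between the two. The proof must therefore interpolate between the regimes using the cascaded Hall conditions rather than reducing to a single clean structure. A secondary difficulty is checking that the simultaneous shift really preserves rainbow-triangle-freeness, since a naive application could in principle create a rainbow triangle after edges in different color classes move independently; this requires tracking which $G_i$ each shifted edge belongs to and pulling back any new rainbow triangle to a pre-existing one in the original configuration.
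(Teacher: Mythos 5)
First, a point of order: the paper does not prove Theorem~\ref{rainbow triangle}; it is quoted from Frankl~\cite{rainbow triangle}, so there is no internal proof to compare against. Judged on its own terms, your proposal has a genuine gap --- in fact two. Case~1 and the Hall-failure observation are fine, but in Case~2 the compression step is not merely unverified, it is false: applying $S_{ab}$ simultaneously to all the $G_i$ can create a rainbow triangle. Take $G_1=\{\{2,3\}\}$, $G_2=\{\{1,4\}\}$, $G_3=\{\{3,4\}\}$; the union contains no triangle at all, so the family is trivially rainbow-triangle-free, yet after $S_{12}$ the edge $\{2,3\}$ becomes $\{1,3\}$ and then $\{1,3\}\in G_1$, $\{1,4\}\in G_2$, $\{3,4\}\in G_3$ form a rainbow triangle. (The same example shows $S_{ab}$ does not even preserve triangle-freeness of a single graph.) So the left-compressed normalization on which all of Case~2 rests is unavailable.

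Second, even granting some normalization, everything after the Hall-failure observation is a declaration of intent rather than an argument: ``cascading the Hall-failure condition \dots would yield structural control'' and ``a careful double count \dots yields the bound'' is precisely the content of the theorem, and it is not carried out. The difficulty you identify yourself --- that the bound has two regimes, $2\binom{n}{2}$ versus $p\lfloor n^2/4\rfloor$, with genuinely intermediate configurations --- is real and is not resolved by the sketch. For what it is worth, the route this paper takes for its own 2-colored analogue (Theorem~\ref{thm-2-colored}), and the natural one here, is induction on $n$ rather than compression: from a triangle one extracts a local inequality bounding $\deg_{G_j}(x)+\deg_{G_j}(y)$ over the various colors $j$ for an edge $xy$ of that triangle; if every edge of every $G_i$ has degree sum at most $n$, convexity as in \eqref{ineq-degreepower} gives $e(G_i)\le\lfloor n^2/4\rfloor$ for each $i$; otherwise one deletes the two endpoints of an edge with maximal degree sum and applies the induction hypothesis on $n-2$ vertices. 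I would redo Case~2 along those lines.
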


Let us mention that the corresponding extremal problem for rainbow $H$-free graphs was studies systematically in \cite{KSSV}. See also \cite{MH}, \cite{LMZ} et al. for more recent results.

We call  an edge-colored triangle {\it 2-colored} if its two edges have the same color and the third edge has a different color.
As a key ingredient to the proof of Theorem \ref{main1}, we proved a 2-colored version of  Mantel's Theorem.

\begin{thm}\label{thm-2-colored}
	Let $p\geq 2$ and let $G_{1},\ldots,G_{p}\subset\binom{[n]}{2}$ be 2-colored triangle free. Then
	\[
	\sum_{i=1}^{p}e(G_i)\leq p \cdot \left\lfloor\frac{n^{2}}{4}\right\rfloor
	\]
Moreover, the equality holds if and only if $G_{1}=\cdots=G_{p}=K_{\lfloor \frac{n}{2}\rfloor, \lceil \frac{n}{2}\rceil}$.
\end{thm}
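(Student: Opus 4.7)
The key observation is that the hypothesis is purely pairwise: a 2-colored triangle uses only two distinct colors, so $G_1,\ldots,G_p$ is 2-colored triangle free if and only if every pair $(G_i,G_j)$ with $i\ne j$ is. Granting the bound $e(G_i)+e(G_j)\le 2\lfloor n^{2}/4\rfloor$ for each such pair, summing over the $\binom{p}{2}$ pairs gives $(p-1)\sum_i e(G_i)\le 2\binom{p}{2}\lfloor n^{2}/4\rfloor$, i.e., $\sum_i e(G_i)\le p\lfloor n^{2}/4\rfloor$; equality for $p\ge 3$ forces equality in every pair, so uniqueness propagates from the $p=2$ case. The problem therefore reduces to proving the theorem for $p=2$.

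For $p=2$, set $C=G_1\cap G_2$, $T_i=G_i\setminus G_{3-i}$ and $H=G_1\cup G_2$, so that $e(G_1)+e(G_2)=e(H)+e(C)$. I would argue by induction on $n$, the base cases $n\le 3$ being direct. The crux is the following structural claim: \emph{every triangle of $H$ is monochromatic and ``pure'', i.e.\ all three of its edges lie in a single $T_i$.} For $p=2$ the only non-2-colored pattern on three edges is monochromatic (a rainbow triangle needs three colors), so on a triangle $\{a,b,c\}\subseteq H$ every choice of one color per edge from $C(ab),C(ac),C(bc)\subseteq\{1,2\}$ must be constant. A short case analysis then rules out any color-set of size $2$ and forces all three singletons to agree.

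If $H$ is triangle free, Mantel's theorem gives $e(H)\le\lfloor n^{2}/4\rfloor$, and since $C\subseteq H$ also $e(C)\le\lfloor n^{2}/4\rfloor$, whence $e(G_1)+e(G_2)\le 2\lfloor n^{2}/4\rfloor$. Equality forces $e(H)=e(C)=\lfloor n^{2}/4\rfloor$, and together with $C\subseteq H$ and Mantel's uniqueness this forces $C=H=K_{\lfloor n/2\rfloor,\lceil n/2\rceil}$ and hence $G_1=G_2=K_{\lfloor n/2\rfloor,\lceil n/2\rceil}$. If instead $H$ has a triangle, the pure-triangle claim lets us assume it lies in $T_1$ on vertices $\{a,b,c\}$. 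Applying the same claim to triangles of the form $\{v,a,b\}$, $\{v,a,c\}$, $\{v,b,c\}$ for $v\notin\{a,b,c\}$ yields a partition $V\setminus\{a,b,c\}=X_1\sqcup X_2\sqcup X_C$: vertices in $X_1$ have all their edges to $\{a,b,c\}$ in $T_1$, while those in $X_2$ (resp.\ $X_C$) have exactly one edge to $\{a,b,c\}$ and it lies in $T_2$ (resp.\ $C$). A direct count of the contribution to $e(G_1)+e(G_2)$ gives $3+3|X_1|+|X_2|+2|X_C|\le 3n-6$, and combining with the induction hypothesis on $V\setminus\{a,b,c\}$ yields
\[
e(G_1)+e(G_2)\le 3n-6+2\left\lfloor\frac{(n-3)^{2}}{4}\right\rfloor<2\left\lfloor\frac{n^{2}}{4}\right\rfloor,
\]
the strict inequality following from a one-line computation (slack $1$ or $2$ depending on the parity of $n$). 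Consequently equality in the $p=2$ case can only come from the triangle-free subcase, already analyzed.

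I expect the main obstacle to be establishing the ``pure triangle'' structural lemma and the associated classification of vertices around a fixed triangle; once these are in place, the Mantel step in the triangle-free subcase, the edge count in the triangle subcase, and the induction all fit together cleanly, and the equality characterization is forced in both subcases and then lifted to general $p$ by the pairwise reduction above.
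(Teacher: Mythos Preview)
Your proof is correct but follows a genuinely different route from the paper's. The paper does not reduce to $p=2$; instead it proves a degree-sum fact (if $xy$ lies in a triangle of $G_i$, then $\deg_{G_i}(x)+\deg_{G_i}(y)+\deg_{G_j}(x)+\deg_{G_j}(y)\le 2n-2$ for all $j\ne i$), and then splits into two cases: either every edge of every $G_i$ satisfies $\deg_{G_i}(x)+\deg_{G_i}(y)\le n$, in which case the classical Mantel degree-counting gives $e(G_i)\le\lfloor n^{2}/4\rfloor$ for each $i$ separately; or some edge violates this, and one removes its two endpoints and inducts on $n-2$, using the degree-sum fact to bound the loss across all colors by $p(n-1)-1$. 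Your argument instead exploits the pairwise nature of the hypothesis to reduce to $p=2$, proves the structural ``pure triangle'' lemma for $G_1\cup G_2$, and in the presence of a triangle removes three vertices rather than two; the triangle-free subcase is handled by applying Mantel once to $H=G_1\cup G_2$ and noting $C=G_1\cap G_2\subseteq H$. Both approaches are short; yours is more combinatorial/structural (the pure-triangle classification is the engine), while the paper's is more analytic in the Mantel tradition (everything is a degree-sum inequality) and treats all $p$ simultaneously without the averaging trick. One small point: your ``partition'' $X_1\sqcup X_2\sqcup X_C$ should also allow a set $X_0$ of vertices with no edge to $\{a,b,c\}$, but this does not affect the bound $3+3|X_1|+|X_2|+2|X_C|\le 3+3(n-3)=3n-6$.
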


Note that  both  Theorem \ref{rainbow triangle} and Theorem \ref{thm-2-colored} imply Mantel's Theorem by setting $G_1=G_2=\ldots=G_p$.

\begin{cor}
	Let $p\geq 2$ and let $G_{1},\ldots,G_{p}\subset\binom{[n]}{2}$ be 2-colored triangle free. Then
	\[
	\prod_{i=1}^{p}e(G_i)\leq  \left\lfloor\frac{n^{2}}{4}\right\rfloor^p.
	\]
Moreover, the equality holds if and only if $G_{1}=\cdots=G_{p}=K_{\lfloor \frac{n}{2}\rfloor, \lceil \frac{n}{2}\rceil}$.
\end{cor}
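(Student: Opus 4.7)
The plan is to derive this product inequality as a one-line consequence of Theorem \ref{thm-2-colored} via the AM--GM inequality. Writing $a_i = e(G_i)$, the AM--GM inequality gives
$$
\prod_{i=1}^{p} a_i \;\leq\; \left(\frac{1}{p}\sum_{i=1}^{p} a_i\right)^{\!p},
$$
and then Theorem \ref{thm-2-colored} bounds the right-hand side by $\lfloor n^{2}/4\rfloor^{p}$. So the whole inequality is immediate once Theorem \ref{thm-2-colored} is in hand; no fresh combinatorial input is needed.

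For the equality characterization I would argue as follows. Assume $\prod_{i=1}^{p} a_i = \lfloor n^{2}/4\rfloor^{p}$. Then the geometric mean of the $a_i$ equals $\lfloor n^{2}/4\rfloor$, so AM--GM forces $\sum_{i=1}^{p} a_i \geq p\lfloor n^{2}/4\rfloor$. On the other hand Theorem \ref{thm-2-colored} gives $\sum_{i=1}^{p} a_i \leq p\lfloor n^{2}/4\rfloor$. Consequently the sum achieves the maximum value in Theorem \ref{thm-2-colored}, and its equality characterization yields $G_1=\cdots=G_p=K_{\lfloor n/2\rfloor,\lceil n/2\rceil}$. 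The converse direction is trivial by direct computation. There is no real obstacle: the only point to verify is that the AM--GM step and the equality case of Theorem \ref{thm-2-colored} are invoked consistently, which they are.
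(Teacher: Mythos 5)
Your proof is correct and follows essentially the same route as the paper: both reduce the product bound to the sum bound of Theorem \ref{thm-2-colored} via AM--GM (the paper applies the two-term AM--GM to each pair $e(G_i),e(G_j)$, while you apply the $p$-term version once, which is a trivial variation). Your treatment of the equality case, forcing $\sum_i e(G_i)=p\lfloor n^2/4\rfloor$ and then invoking the equality characterization of Theorem \ref{thm-2-colored}, is also valid and in fact more explicit than what the paper writes.
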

\begin{proof}
For any $1\leq i<j\leq p$, by Theorem \ref{thm-2-colored} we have
\[
e(G_i)e(G_j)\leq \left(\frac{e(G_i)+e(G_j)}{2}\right)^2\leq  \left\lfloor\frac{n^{2}}{4}\right\rfloor^2.
\]
Thus the corollary follows.
\end{proof}

{\noindent\bf Remark.} Let us mention that in \cite{rainbow triangle} Frankl conjectured that if  $G_{1},G_2,G_3\subset\binom{[n]}{2}$ is  rainbow triangle free, then
\[
e(G_1)e(G_2)e(G_3)\leq  \left\lfloor\frac{n^{2}}{4}\right\rfloor^3.
\]
However, it was  disproved by Frankl, Gy\H{o}ri, He, Lv, Salia, Tompkins, Varga and
Zhu \cite{rainbowtriangle2}.

Let us recall some definitions and notations. Let   $\mathcal{H}$ be a 3-graph and let $u\in V (\mathcal{H})$.  The {\it link graph} $L_\mathcal{H}(u)$ is defined as
\[
L_\mathcal{H}(u)=\left\{vw\in \binom{V (\mathcal{H})}{2}\colon uvw\in \mathcal{H}\right\}.
\]
The {\it degree } $\deg_{\mathcal{H}}(u)$ of $u$ is defined as the number of edges in $L_{\mathcal{H}}(u)$. For $W\subset V (\mathcal{H})$, we also use
\[
L_\mathcal{H}(u,W)= \left\{vw\in L_{\mathcal{H}}(u)\colon \{v,w\}\subset W\right\}.
\]
and $\deg_{\hh}(u,W)=|L_\hh(u,W)|.$ Let $N_{\hh}(v)$ denote the set of vertices $u$ such that $uv$ is covered by an edge of $\hh$.    For $\{u,v\}\subset V (\mathcal{H}),$ the {\it neighborhood } $N_\hh(u,v)$ is defined as
\[
N_\hh(u,v)=\left\{w\in V(\hh)\colon uvw\in \mathcal{H}\right\}.
\]
and $\deg_{\mathcal{H}}(u,v)=|N_\hh(u,v)|$. When the context is clear, we often omit the subscript $\hh$.

For  $S\subseteq V(\hh),$ let $\hh[S]$ denote the subgraph of $\hh$ induced by $S$ and let $\hh-S$ denote the subgraph $\hh[V(\hh)\setminus S]$. For $S,S'\subseteq V(\hh)$ and $S\cap S'=\emptyset$,
let $\hh[S, S']$ denote the subgraph  on the vertex set  $S\cup S'$ with the edge set consisting of all  $e\in E(\hh)$ intersecting both $S$ and $S'$.

\section{Proof of Theorem \ref{thm-2-colored}}

\begin{fact}
Let $p\geq 2$ and let $G_{1},\ldots,G_{p}\subset\binom{[n]}{2}$ be 2-colored triangle free.  If $xy\in E(G_1)$ and $N_{G_1}(x)\cap N_{G_1}(y)\neq \emptyset$, then for $2\leq i\leq p$,
\begin{align}\label{ineq-key1}
\deg_{G_i}(x) + \deg_{G_i}(y)+\deg_{G_1}(x) + \deg_{G_1}(y) \leq 2n-2.
\end{align}
\end{fact}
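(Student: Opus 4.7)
Fix $i\in\{2,\dots,p\}$ and, by hypothesis, pick a common neighbour $z\in N_{G_1}(x)\cap N_{G_1}(y)$. My first step will be to show $xy\notin E(G_i)$: if $xy$ were in $G_i$, then the edges $xy\in G_i$, $xz\in G_1$, $yz\in G_1$ would form a triangle on $\{x,y,z\}$ with colour pattern $(i,1,1)$, a forbidden 2-coloured triangle.

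Next, for each $w\in V\setminus\{x,y\}$ I will consider the triangle on $\{x,y,w\}$, which already carries the edge $xy$ in $G_1$. Introduce the four indicators
\[
a_1=[xw\in G_1],\quad b_1=[yw\in G_1],\quad a_i=[xw\in G_i],\quad b_i=[yw\in G_i].
\]
The 2-coloured-triangle-free condition forbids each of $(a_1,b_i)=(1,1)$, $(a_i,b_1)=(1,1)$, and $(a_i,b_i)=(1,1)$, since these respectively produce the colour patterns $(1,1,i)$, $(1,i,1)$, $(1,i,i)$ on the triangle. Checking the four three-element subsets of $\{a_1,b_1,a_i,b_i\}$ shows that each contains at least one of these forbidden pairs, so at most two of the four indicators can equal $1$; hence $a_1(w)+b_1(w)+a_i(w)+b_i(w)\leq 2$.

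Summing this inequality over the $n-2$ vertices $w\in V\setminus\{x,y\}$, and recognising that each of the four sums differs from the corresponding degree only by the contribution of the edge $xy$, produces the values $\deg_{G_1}(x)-1$, $\deg_{G_1}(y)-1$, $\deg_{G_i}(x)$, $\deg_{G_i}(y)$ (using $xy\in G_1$ and, by the first step, $xy\notin G_i$). The resulting inequality rearranges to the claimed bound $\deg_{G_1}(x)+\deg_{G_1}(y)+\deg_{G_i}(x)+\deg_{G_i}(y)\leq 2n-2$. The subtle point I anticipate is the very first step: without ruling out $xy\in G_i$, the local counting leaves an extra $+2[xy\in G_i]$ on the right and the bound degrades to $2n$, so the hypothesis that $x$ and $y$ share a $G_1$-neighbour is used in an essential way.
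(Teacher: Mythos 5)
Your proof is correct and is essentially the paper's argument in a different notation: the forbidden indicator pairs $(a_1,b_i)$ and $(a_i,b_1)$ are exactly the paper's disjointness statements $N_{G_1}(x)\cap N_{G_i}(y)=\emptyset$ and $N_{G_i}(x)\cap N_{G_1}(y)=\emptyset$, and your per-vertex count of at most two adjacencies, summed over $w$, reproduces the paper's two inequalities $\deg_{G_1}(x)+\deg_{G_i}(y)\leq n-1$ and $\deg_{G_i}(x)+\deg_{G_1}(y)\leq n-1$. The opening step ruling out $xy\in E(G_i)$ via the common neighbour $z$ is also identical to the paper's.
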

\begin{proof}
Let  $z\in N_{G_1}(x)\cap N_{G_1}(y)$. Then $xyz$ forms a triangle in $G_1$. Since $G_{1},\ldots,G_{p}$ are 2-colored triangle free, $xy\notin E(G_i)$. If $w\in N_{G_1}(x)\cap N_{G_i}(y)$, then $xy,xw\in E(G_1)$ and $yw\in E(G_i)$. That is, $xyw$ forms a 2-colored triangle, a contradiction. Thus $N_{G_1}(x)\cap N_{G_i}(y)=\emptyset$. Similarly, we have $N_{G_i}(x)\cap N_{G_1}(y)=\emptyset$. Note that $x\notin N_{G_1}(x)\cup N_{G_i}(y)$ and $y\notin N_{G_i}(x)\cup N_{G_1}(y)$. It follows that
\[
\deg_{G_1}(x) +\deg_{G_i}(y) \leq n-1 \mbox{ and } \deg_{G_i}(x) +\deg_{G_1}(y) \leq n-1.
\]
Thus \eqref{ineq-key1} follows.
\end{proof}

The following inequality is standard and useful.

\begin{align}\label{ineq-degreepower}
\sum_{v\in V(G)} \deg_{G}(v)^2\geq  \frac{4e^2(G)}{|V(G)|}.
\end{align}

\begin{proof}
Since $x^2$ is a convex function, by the Jensen's inequality
\[
\frac{1}{|V(G)|}\sum_{v\in V(G)} \deg_{G}(v)^2 \geq \left(\frac{\sum_{v\in V(G)} \deg_{G}(v)}{|V(G)|}\right)^2 = \frac{4e^2(G)}{|V(G)|^2}
\]
and \eqref{ineq-degreepower} follows.
\end{proof}

\begin{proof}[Proof of Theorem \ref{thm-2-colored}]
	We prove the theorem by induction on $n$. For $n=1$ and $n=2,$ the theorem holds trivially. For $n=3$, it is easy to check that $\sum_{i=1}^{p}e(G_i)\leq 2p= p\lfloor\frac{n^{2}}{4}\rfloor$. Now assume that the theorem holds for $1,2,\ldots, n-1$ and we prove it for $n$. We distinguish two cases.

{\bf Case 1. } $\deg_{G_i}(x)+\deg_{G_i}(y)\leq n$ for all $xy\in E(G_i)$ and $i\in [p]$.

Note that by \eqref{ineq-degreepower},
	\begin{equation}\label{2deg=e}
\begin{aligned}
		\sum_{xy\in E(G_i)}(\deg_{G_i}(x)+\deg_{G_i}(y))
		&=\sum_{x\in V(G_i)} \deg_{G_i}^{2}(x)\geq  \frac{4e^{2}(G_i)}{n}.
	\end{aligned}
	\end{equation}
It follows that
	\[
	\frac{4e^{2}(G_i)}{n} \leq \sum_{xy\in E(G_i)}(\deg_{G_i}(x)+\deg_{G_i}(y))\leq n\cdot e(G_i).
	\]
Thus $e(G_i)\leq \lfloor\frac{n^{2}}{4}\rfloor$ for each $i\in [p]$ and
	$\sum_{i=1}^{p}e(G_i)\leq p\cdot e(G_i)\leq p\cdot\lfloor\frac{n^{2}}{4}\rfloor$ follows. Moreover, the equality holds if and only if $\deg_{G_i}(x)+\deg_{G_i}(y)= n$ holds for all $xy\in E(G_i)$ and $i\in [p]$. If there is a triangle $xyz\in E(G_i)$ then by \eqref{ineq-key1} we have
\[
\deg_{G_i}(x) + \deg_{G_i}(y)+\deg_{G_j}(x) + \deg_{G_j}(y) \leq 2n-2 \mbox{ for } j\neq i,
\]
a contradiction. Thus the equality holds  only if each $G_i$ is triangle free. By Mantel's theorem, each $G_i$ is a $K_{\lfloor \frac{n}{2}\rfloor, \lceil \frac{n}{2}\rceil}$. Therefore the equality holds if and only if $G_{1}=\cdots=G_{p}=K_{\lfloor \frac{n}{2}\rfloor, \lceil \frac{n}{2}\rceil}$.

{\bf Case 2. } There exist $i\in [p]$ and $xy\in E(G_i)$ such that $\deg_{G_i}(x)+\deg_{G_i}(y)> n$.

Choose $i$ and $uv\in E(G_i)$ such that  $\deg_{G_i}(u)+\deg_{G_i}(v)$ is maximal. Assume that $\deg_{G_i}(u)+\deg_{G_i}(v)=n+t$. Then
for any $j\neq i$, by \eqref{ineq-key1}
\[
\deg_{G_j}(u)+\deg_{G_j}(v) \leq 2n-2 -n-t =n-t-2.
\]
It follows that for $p\geq 2$
\[
\sum_{1\leq \ell \leq p} \left(\deg_{G_\ell}(u)+\deg_{G_\ell}(v) \right) \leq n+t+(p-1)(n-t-2) \leq p(n-1).
\]
Then by the induction hypothesis,
	\begin{align*}
		\sum_{1\leq \ell \leq p}e(G_\ell)&\leq \sum_{1\leq \ell \leq p}e(G_\ell-\{u,v\})+ \sum_{ \ell\neq i}\left(\deg_{G_\ell}(u)+\deg_{G_\ell}(v)\right)+ \left(\deg_{G_i}(u)+\deg_{G_i}(v)-1\right)\\[3pt]
		&\leq p\cdot\left\lfloor\frac{(n-2)^{2}}{4}\right\rfloor+ p\cdot(n-1) -1 \\[3pt]
		&= p\cdot\left\lfloor\frac{n^{2}}{4}\right\rfloor-1
	\end{align*}
\end{proof}

We have also need  the follow result.
\begin{thm}\label{thm-disjoint}
Let $0<\alpha, \beta<\frac{1}{4}$ be reals satisfying $\beta+2\alpha^{\frac{3}{4}}<\frac{1}{4}$ and let $p\geq 2$, $n>\frac{4}{1-4\beta}$. Suppose that $G_{1},\ldots,G_{p}\subset\binom{[n]}{2}$ are $2$-colored triangle free. If $E(G_i)\cap E(G_j)=\emptyset$ and $e(G_i),e(G_j)>\frac{n^2}{4}-\alpha n^2$  for some $1\leq i<j\leq p$, then
\begin{align}\label{ineq-2.1}
\sum_{i=1}^{p}e(G_i)\leq p \cdot \frac{n^{2}}{4}-(p-2)\beta n^2.
\end{align}
\end{thm}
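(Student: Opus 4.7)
The plan is to use the edge-disjointness of $G_i$ and $G_j$ together with the 2-colored triangle-free condition to force a near-bipartition of the vertex set $V$, and then exploit this structure to bound $e(G_k)$ for each $k\ne i,j$ well below $n^2/4$.

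First I would establish a double-counting inequality capturing the interaction between $G_i$ and $G_j$. For any vertex $x$ and any $y\in N_{G_i}(x)$, $z\in N_{G_j}(x)$, the triangle $xyz$ becomes 2-colored whenever $yz\in G_i\cup G_j$; hence $yz$ must be a non-edge of $G_i\cup G_j$. Counting ordered triples two ways yields
\[
\sum_{x\in[n]} \deg_{G_i}(x)\,\deg_{G_j}(x) \;\le\; n\,\bigl|\overline{G_i\cup G_j}\bigr| \;\le\; 2\alpha n^{3},
\]
where the second bound uses $e(G_i)+e(G_j)>n^2/2-2\alpha n^2$. Combined with the degree-sum lower bounds coming from the hypothesis on $e(G_i)$ and $e(G_j)$, this anti-correlation forces a near-bipartition $V=V_1\cup V_2$ (up to $O(\alpha^{1/4}n)$ exceptional vertices), where $V_1$ is the set of vertices heavy in $G_i$ and light in $G_j$ and $V_2$ is dual, with $|V_1|,|V_2|\approx n/2$.

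For any $xy\in E(G_k)$ with $k\ne i,j$, the 2-colored triangle-free condition applied to the pairs $\{G_k,G_i\}$ and $\{G_k,G_j\}$ forces $N_{G_i}(x)\cap N_{G_i}(y)=\emptyset$ and $N_{G_j}(x)\cap N_{G_j}(y)=\emptyset$, so $\deg_{G_i}(x)+\deg_{G_i}(y)\le n$ and $\deg_{G_j}(x)+\deg_{G_j}(y)\le n$. Two typical $V_1$-vertices both have $\deg_{G_i}$ close to $n$, violating the first inequality; by symmetry, $G_k$ has essentially no within-$V_1$ or within-$V_2$ edge, so $G_k$ is almost bipartite across $(V_1,V_2)$. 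The genuine saving of $\beta n^2$ is extracted by noting that for $x\in V_1,y\in V_2$ with $\deg_{G_i}(x)+\deg_{G_i}(y)>n$ or $\deg_{G_j}(x)+\deg_{G_j}(y)>n$, the cross-edge $xy$ is also forbidden; careful counting of such forbidden cross-pairs together with the error terms controlled by the $\alpha^{3/4}$-threshold gives $e(G_k)\le n^2/4-\beta n^2$ provided $\beta+2\alpha^{3/4}<1/4$.

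Summing the bound $e(G_i)+e(G_j)\le 2\lfloor n^2/4\rfloor$ from Theorem~\ref{thm-2-colored} with the sharper bound for the remaining $p-2$ graphs yields the stated inequality, with the condition $n>4/(1-4\beta)$ absorbing the $\lfloor\cdot\rfloor$-rounding. The main obstacle will be the quantitative step for $e(G_k)$: the ``essentially bipartite'' conclusion by itself yields only $e(G_k)\le n^2/4+o(n^2)$, and extracting the genuine $\beta n^2$ saving demands a careful, two-sided accounting of forbidden cross-pairs and exceptional vertices, using both 2-colored conditions simultaneously together with the $\alpha^{3/4}$-threshold built into the hypothesis.
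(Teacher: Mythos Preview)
Your structural approach via near-bipartition is plausible in outline, but the crucial quantitative step---extracting a saving of exactly $\beta n^2$ from forbidden cross-pairs under the hypothesis $\beta+2\alpha^{3/4}<\tfrac14$---is left as an acknowledged ``main obstacle,'' and it is not clear how the exponent $\tfrac34$ would ever emerge from your line of argument. The anti-correlation inequality and the bipartition give at best $e(G_k)\le |V_1|\,|V_2|+O(\alpha^{1/2})n^2\le n^2/4+o(n^2)$. The further restrictions you invoke on cross-pairs (that $xy\in E(G_k)$ forces $\deg_{G_i}(x)+\deg_{G_i}(y)\le n$ and $\deg_{G_j}(x)+\deg_{G_j}(y)\le n$) are correct, but for a typical $x\in V_1$, $y\in V_2$ both of these sums are already $\approx n$, so it is not evident that a fixed positive proportion of cross-pairs is excluded, let alone one matching the specific threshold $\beta+2\alpha^{3/4}<\tfrac14$.

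The paper's proof is completely different and much shorter. It argues by contradiction from the perspective of $G_k$ rather than from the structure of $G_i,G_j$: suppose some $k\ne i,j$ has $e(G_k)>\tfrac{n^2}{4}-\beta n^2$. First, $\Delta(G_k)<2\sqrt{\alpha}\,n$, since a larger neighborhood $N_{G_k}(v)$ would be independent in both $G_i$ and $G_j$ and force $e(G_i)+e(G_j)<\tfrac{n^2}{2}-2\alpha n^2$. Then one looks at the \emph{square graph} $G_k^{\,2}$ (pairs sharing a common $G_k$-neighbor): the degree cap plus Jensen gives $e(G_k^{\,2})>e(G_k)^2/(2\sqrt{\alpha}\,n^2)$, and since $e(G_k)>\tfrac{n^2}{4}-\beta n^2>2\alpha^{3/4}n^2$ this yields $e(G_k^{\,2})>2\alpha n^2$. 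But the $2$-colored triangle-free condition gives $E(G_k^{\,2})\cap\bigl(E(G_i)\cup E(G_j)\bigr)=\emptyset$, so $e(G_i)+e(G_j)\le\binom{n}{2}-e(G_k^{\,2})<\tfrac{n^2}{2}-2\alpha n^2$, contradicting the hypothesis. The $\alpha^{3/4}$ threshold drops out immediately from the chain $\Delta(G_k)<2\sqrt{\alpha}\,n\Rightarrow e(G_k^{\,2})\gtrsim e(G_k)^2/(\sqrt{\alpha}\,n^2)$. Note also that the paper never needs Theorem~\ref{thm-2-colored} here; the trivial bound $e(G_i)+e(G_j)\le\binom{n}{2}$ from edge-disjointness is all that is used for the pair $(i,j)$.
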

\begin{proof}
By symmetry assume  $E(G_1)\cap E(G_2)=\emptyset$ and $e(G_1),e(G_2)>\frac{n^2}{4}-\alpha n^2.$ Then obviously $e(G_1)+e(G_2)\leq \binom{n}{2}< \frac{n^{2}}{2}. $ If $e(G_k)\leq \frac{n^2}{4}-\beta n^2$ for all $k\in\{3,4,\ldots,p\}$, then \eqref{ineq-2.1} follows.
Thus by symmetry we may assume $e(G_3)>\frac{n^2}{4}-\beta n^2$.

\begin{claim}\label{claim-prei-2}
	$\Delta(G_3)<2\sqrt{\alpha}n$.
\end{claim}
	\begin{proof}
Suppose that there exists $v\in [n]$ with $\deg_{G_3}(v)\geq 2\sqrt{\alpha}n$. Let $X=N_{G_3}(v)$. Since $G_{1},\ldots,G_{p}$ are $2$-colored triangle free, $X$ is an independent set of $G_k$ for all $k\neq  3$.  Thus,
\begin{align*}
e(G_1)+e(G_2)\leq \binom{n}{2}-\binom{|X|}{2}\leq \binom{n}{2}-\binom{2\sqrt{\alpha}n}{2}<\frac{n^2}{2}-2\alpha n^2
\end{align*}
contradicting the fact that $e(G_i)>\frac{n^2}{4}-\alpha n^2$, $i=1,2$.
\end{proof}

Define $G_3^2$ as the square graph of $G_3$ such that  $xy\in E(G_3^2)$ if and only if there exists $z\in [n]$ such that $xz,yz\in E(G_3)$.  Since $\Delta(G_3)<2\sqrt{\alpha}n$, for any $xy\in E(G_3^2)$ there are at most $2\sqrt{\alpha}n$ choices of $z$ with $xz,yz\in E(G_3)$. Thus,
\begin{equation}\label{vertexpair}
\begin{aligned}
		 e(G_3^2) \geq \frac{1}{2\sqrt{\alpha}n}\cdot\sum_{i=1}^{n}\binom{\deg_{G_3}(i)}{2}
&= \frac{1}{4\sqrt{\alpha}n}\cdot\sum_{i=1}^{n}\left(\deg_{G_3}^2(i)-\deg_{G_3}(i)\right) \\[3pt]
&\overset{\eqref{ineq-degreepower}}{>}\frac{1}{4\sqrt{\alpha}n}\cdot\left(\frac{4e^{2}(G_3)}{n}-2e(G_3)\right)\\[3pt]
        &=\frac{e(G_3)}{\sqrt{\alpha}n^2}\left(e(G_3)-\frac{n}{2}\right).
\end{aligned}
\end{equation}
Note that $n>\frac{4}{1-4\beta}$ implies $e(G_3)>\frac{n^2}{4}-\beta n^2>n$. It follows that $e(G_3)-\frac{n}{2}>\frac{e(G_3)}{2}$.  Then by $e(G_3)> \frac{n^2}{4}-\beta n^2>2\alpha^{\frac{3}{4}}n^2$,
\[
 e(G_3^2) \geq \frac{e(G_3)^2}{2\sqrt{\alpha}n^2}>\frac{(2\alpha^{\frac{3}{4}}n^2)^2}{2\sqrt{\alpha}n^2} =2\alpha n^2.
\]
Since $G_1,G_2,G_3$ are 2-colored triangle free, we infer that $(E(G_1)\cup E(G_2))\cap E(G_3^2)=\emptyset$. Thus,
\[
e(G_1)+e(G_2)\leq \binom{n}{2}-e(G_3^2)\leq \binom{n}{2}- 2\alpha n^2.
\]
contradicting $e(G_i)>\frac{n^2}{4}-\alpha n^2$, $i=1,2$. Therefore, $e(G_k)\leq \frac{n^2}{4}-\beta n^2$ for all $k=3,4,\ldots,p$ and $\sum_{i=1}^{p}e(G_i)\leq p \cdot \frac{n^{2}}{4}-(p-2)\beta n^2$.
\end{proof}

\section{The case $s\leq 2$}

We  need the following two classical results from extremal set theory.
\begin{thm}[The Exact Erd\H{o}s-Ko-Rado Theorem \cite{EKR},\cite{F-EKR},\cite{W-EKR}]\label{thm-EKR}
  Let $\ff\subset \binom{[n]}{k}$ be a $t$-intersecting family. If $n>(t+1)(k-t+1)$ then
  \[
  |\ff|\leq \binom{n-t}{k-t}.
  \]
\end{thm}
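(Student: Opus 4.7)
The plan is to apply the classical shifting technique. For each pair $1 \le i < j \le n$, define the compression $S_{ij}(\mathcal{F})$ by replacing every $F \in \mathcal{F}$ with $j \in F$, $i \notin F$, and $(F \setminus \{j\}) \cup \{i\} \notin \mathcal{F}$ by the set $(F \setminus \{j\}) \cup \{i\}$, while leaving all other sets untouched. A routine check shows that $S_{ij}$ preserves both $|\mathcal{F}|$ and the $t$-intersecting property. Applying these compressions repeatedly until stabilization, I may assume without loss of generality that $\mathcal{F}$ is shifted.

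The heart of the argument is then the structural claim that, under $n > (t+1)(k-t+1)$, every member of a shifted $t$-intersecting family $\mathcal{F} \subset \binom{[n]}{k}$ must contain $[t] := \{1, 2, \ldots, t\}$. Granting this, we immediately obtain $|\mathcal{F}| \le \binom{n-t}{k-t}$, since the only freedom is in the choice of the remaining $k-t$ elements from $\{t+1, \ldots, n\}$. To prove the structural claim, I would induct on $k-t$, with the base case $k = t$ being essentially trivial: a shifted $t$-intersecting family of $t$-sets consists of a single set, which shiftedness forces to be $[t]$ itself. For the inductive step I would split $\mathcal{F}$ according to whether the element $n$ belongs to a given set, obtaining two smaller shifted families on $[n-1]$ that inherit cross-intersection conditions, and then invoke the inductive hypothesis together with shiftedness to conclude.

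The main obstacle is exploiting the precise threshold $n > (t+1)(k-t+1)$ in a tight way. This bound is sharp: the Frankl families $\mathcal{A}_r = \{F \in \binom{[n]}{k} : |F \cap [t+2r]| \ge t+r\}$ can exceed $\binom{n-t}{k-t}$ once $n$ drops below the threshold, so the argument cannot be purely structural and must use the condition on $n$ quantitatively. Typically this comes in when one shows that a hypothetical $F^* \in \mathcal{F}$ missing some element of $[t]$ would, by shiftedness, force so many companion sets into $\mathcal{F}$ that two of them must fail to $t$-intersect; the counting balances the number of such companions against $n$, and the threshold $n > (t+1)(k-t+1)$ is exactly what makes the contradiction go through. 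The extremal configuration is then the full $t$-star $\{F \in \binom{[n]}{k} : [t] \subseteq F\}$, which achieves the bound with equality.
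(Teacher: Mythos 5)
The paper does not prove Theorem~\ref{thm-EKR}; it is quoted as a known classical result (Erd\H{o}s--Ko--Rado for large $n$, Frankl for $t\ge 15$, Wilson for all $t$), so there is no in-paper argument to compare against, and your sketch must stand on its own. It does not: the structural claim at its heart --- that for $n>(t+1)(k-t+1)$ every member of a shifted $t$-intersecting family contains $[t]$ --- is false. Consider
\[
\mathcal{A}_1=\left\{F\in\binom{[n]}{k}\colon |F\cap[t+2]|\ge t+1\right\}.
\]
This family is invariant under every compression $S_{ij}$ (shifting can only increase $|F\cap[t+2]|$), and it is $t$-intersecting for all $n$, since any two members satisfy $|F_1\cap F_2|\ge 2(t+1)-(t+2)=t$. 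Yet it contains sets with $F\cap[t+2]=\{2,\dots,t+2\}$, which avoid the element $1\in[t]$. For $t=1$ the shifted Hilton--Milner family is another counterexample. So the contradiction you hope to extract from a hypothetical $F^*$ missing an element of $[t]$ cannot materialize at any value of $n$: shifted $t$-intersecting non-stars exist for all $n$; they are merely smaller than the $t$-star when $n$ exceeds the threshold. Any correct proof must therefore bound $|\mathcal{F}|$ without forcing $\mathcal{F}$ to be a star.

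This is precisely why the theorem as stated is deep. The induction you describe (splitting on whether $n$ lies in a set and using inherited cross-intersection) is the standard shifting proof of the $t=1$ case for $n\ge 2k$, where the threshold $(t+1)(k-t+1)=2k$ happens to be reachable combinatorially. For $t\ge 2$ that induction does not come close to the sharp threshold; Frankl's combinatorial argument reaches it only for $t\ge 15$, and the full range of $t$ requires Wilson's eigenvalue method on the Johnson scheme (or the later Ahlswede--Khachatrian generating-set machinery). Your proposal correctly identifies the shifting reduction and the extremal configuration, but the key lemma it rests on is wrong, and no elementary repair along the lines you indicate is known.
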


\begin{thm}[The Hilton-Milner Theorem \cite{HM}]\label{thm-HM}
  Let $n\geq 2k.$ If $\ff\subset \binom{[n]}{k}$ is an intersecting family that is not a star, then
  \[
  |\ff|\leq \binom{n-1}{k-1}-\binom{n-k-1}{k-1}+1.
  \]
\end{thm}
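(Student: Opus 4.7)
The plan is to prove the Hilton-Milner bound by Frankl's shifting method, which is well suited to intersecting families. First I would apply the shifting operators $S_{ij}$ iteratively until $\mathcal{F}$ becomes left-compressed; this preserves both $|\mathcal{F}|$ and the intersecting property, and a short verification using $n\geq 2k$ shows that it also preserves the non-star condition $\bigcap_{F\in\mathcal{F}}F=\emptyset$. Thus it suffices to establish the bound assuming $\mathcal{F}$ is left-compressed, intersecting, and not a star.

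Next I would prove the key structural fact that $B_0:=\{2,3,\ldots,k+1\}$ must lie in any such $\mathcal{F}$: since $\mathcal{F}$ is not a star, some $F\in\mathcal{F}$ avoids the element $1$, and repeated left-compression drives $F$ down to the lex-smallest $k$-subset of $\{2,\ldots,n\}$, which is precisely $B_0$. With $B_0\in\mathcal{F}$ in hand, I would partition $\mathcal{F}=\mathcal{F}_1\sqcup\mathcal{F}_2$ according to whether a member contains $1$; since every $F\in\mathcal{F}_1$ must meet $B_0$, a direct count of $k$-sets through $1$ meeting $B_0$ yields
\[
|\mathcal{F}_1| \;\le\; \binom{n-1}{k-1}-\binom{n-k-1}{k-1}.
\]

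The main obstacle is then controlling $\mathcal{F}_2$, the members of $\mathcal{F}$ avoiding $1$. I expect the argument to proceed by showing that any $F'\in\mathcal{F}_2\setminus\{B_0\}$ creates, via left-compression, a shifted companion whose existence (together with the cross-intersecting requirement between $\mathcal{F}_1$ and $\mathcal{F}_2$) forces a compensating decrease in $|\mathcal{F}_1|$, so that the total $|\mathcal{F}_1|+|\mathcal{F}_2|$ never exceeds $\binom{n-1}{k-1}-\binom{n-k-1}{k-1}+1$. Tracking this trade-off sharply is the technical heart of the proof, and is exactly where the hypothesis $n\geq 2k$ gets used in its strongest form; in the extremal case $\mathcal{F}_2=\{B_0\}$ while $\mathcal{F}_1$ consists of all $k$-sets through $1$ meeting $B_0$, giving the Hilton-Milner family which attains the bound with equality.
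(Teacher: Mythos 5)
The paper does not prove this statement: Theorem \ref{thm-HM} is the classical Hilton--Milner theorem, quoted with a citation to \cite{HM} and used as a black box (in the proof of Proposition \ref{prop-1}). So there is no in-paper argument to compare against; your proposal has to stand on its own as a proof of Hilton--Milner, and as written it does not.

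There are two genuine gaps. First, the reduction to a left-compressed family: the claim that the shifts $S_{ij}$ preserve the non-star condition is false, even under $n\geq 2k$. For instance $\mathcal{F}=\{123,145,245,345\}\subset\binom{[n]}{3}$ is intersecting with $\bigcap_{F\in\mathcal{F}}F=\emptyset$, yet $S_{14}(\mathcal{F})=\{123,125,135,145\}$ is a star with center $1$. The known shifting proofs (e.g.\ Frankl--F\"uredi) must work around exactly this point, typically by stopping at the first shift that would create a star and exploiting the special structure that arises there (every member meets a fixed pair $\{i,j\}$ and the $j\to i$ shift of each member avoiding $i$ lies outside $\mathcal{F}$); your ``short verification'' does not exist. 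Second, the bound on $\mathcal{F}_2$ is explicitly deferred (``I expect the argument to proceed by showing\ldots''), yet it is the entire content of the theorem: once $B_0=\{2,\dots,k+1\}\in\mathcal{F}$ the estimate $|\mathcal{F}_1|\leq\binom{n-1}{k-1}-\binom{n-k-1}{k-1}$ is immediate, but $\mathcal{F}_2$ need not be close to a single set. For $k=3$ the family of all triples meeting $\{1,2,3\}$ in at least two points is non-trivial intersecting, has $|\mathcal{F}_2|=n-3$, and attains the Hilton--Milner bound $3n-8$ with equality; so the trade-off between $|\mathcal{F}_1|$ and $|\mathcal{F}_2|$ (usually realized as an injection from $\mathcal{F}_2\setminus\{B_0\}$ into the $k$-sets through $1$ that meet $B_0$ but are missing from $\mathcal{F}_1$) is genuinely needed and must be sharp. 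Until both points are supplied, the proposal is an outline of the standard proof rather than a proof.
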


For any $X\subset [n]$, we use $\bar{X}$ to denote $[n]\setminus X$.  The following lemma plays a crucial role in our proof.

\begin{lem}\label{lem-e(H)}
Let $\hh\subset \binom{[n]}{3}$ be a 3-graph with $\nu(\hh)\leq s$, $s\geq 2$ and $n\geq 3(s+1)$. Then there exist $0\leq i \leq s$ and $A\subset S\subset [n]$ with $|A|=s-i$ and $|S|=s$ such that $\deg(x)\leq (s+2i)n$ for all $x\in \bar{A}$ and
\[
|\hh|\leq \sum_{v\in A} \deg(v,\bar{S})+\sum_{\{u,v\}\in \binom{A}{2}}\deg(u,v)+isn+6i^2n.
\]
\end{lem}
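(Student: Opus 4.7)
I would build $A$ and $S$ simultaneously by balancing two goals---that $A$ contains all vertices of ``too large'' degree, and that $A$ covers most of a maximum matching of $\hh$---with the index $i$ emerging from the construction rather than being fixed in advance. For each $t\in\{0,1,\ldots,s\}$, set $H_t=\{v:\deg(v)>(s+2t)n\}$. Using $\nu(\hh)\leq s$ together with a greedy disjoint-edge argument in the spirit of Lemma~\ref{Lemma-rainbow matching} (many vertices of very large degree would allow one to greedily extract $s+1$ pairwise disjoint edges, a contradiction), I would show that $|H_t|\leq s-t$ for at least one $t$, and pick $i$ to be the smallest such index. Fix a maximum matching $M$ of $\hh$ (note $|M|\leq s$), and choose $A$ of size $s-i$ with $H_i\subseteq A$ such that $A$ meets as many edges of $M$ as possible. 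The condition $\deg(x)\leq(s+2i)n$ for $x\in\bar A$ is automatic from $H_i\subseteq A$, and from the choice of $A$ relative to $M$ (so that only about $i$ edges of $M$ survive in $\hh-A$, combined with maximality of $M$) I would argue $\nu(\hh-A)\leq i$.

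Next, let $M'=\{f_1,\ldots,f_k\}$ be a maximum matching in $\hh-A$ with $k=\nu(\hh-A)\leq i$; pick one vertex $a_j\in f_j$ for each $j$ and extend $\{a_1,\ldots,a_k\}$ to a set $S\setminus A$ of size $i$, padding if necessary, so $|S|=s$. Classifying the edges of $\hh$ by the pair $(|e\cap A|,|e\cap (S\setminus A)|)$, a routine calculation shows
\[
\sum_{v\in A}\deg(v,\bar S)+\sum_{\{u,v\}\in\binom{A}{2}}\deg(u,v)\;\geq\;|\hh|-|\hh[\bar S]|-R,
\]
where $R$ collects the contribution from edges with $|e\cap A|\leq 1$ and $|e\cap(S\setminus A)|\geq 1$. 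By the degree condition, $R\leq \sum_{v\in S\setminus A}\deg(v)\leq i(s+2i)n=isn+2i^2n$. For $|\hh[\bar S]|$, the maximality of $M'$ in $\hh-A$ forces every edge of $\hh[\bar S]$ to meet $V(M')\setminus(S\setminus A)$, a set of at most $2i$ vertices lying in $\bar A$; a careful analysis exploiting $\nu(\hh[\bar S])\leq i$---rather than only the crude degree bound---yields $|\hh[\bar S]|\leq 4i^2 n$, and adding the two estimates gives the lemma.

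\emph{Main obstacle.} The most delicate step is reconciling the two pulling constraints on $A$: it must contain the high-degree set $H_i$, and it must cover enough of a maximum matching to force $\nu(\hh-A)\leq i$. The existence of a common index $i$ making both compatible with $|A|=s-i$ is precisely where $\nu(\hh)\leq s$ is used, with the hypothesis $n\geq 3(s+1)$ ensuring that there is enough room for the combined bookkeeping. A secondary technical hurdle is the refined bound $|\hh[\bar S]|\leq 4i^2 n$: the naive estimate using only $\deg(v)\leq(s+2i)n$ for $v\in V(M')\setminus(S\setminus A)$ gives $2i(s+2i)n$, which is off by $2isn$; closing this gap requires invoking $\nu(\hh-A)\leq i$ to control the degrees of these $2i$ vertices \emph{inside} the residual hypergraph, not merely in $\hh$.
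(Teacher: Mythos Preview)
Your plan has the right skeleton, but it contains a genuine gap at the step ``$\nu(\hh-A)\leq i$'', and the gap is directly caused by your choice of $i$ as the \emph{smallest} index with $|H_i|\leq s-i$.

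Hitting many edges of a fixed maximum matching $M$ with $A$ does \emph{not} bound $\nu(\hh-A)$: a completely different matching of size $i+1$ (or larger) can live in $\hh-A$ without contradicting the maximality of $M$. The only way to rule this out is to show that any $(i+1)$-matching in $\hh-A$ can be greedily extended by edges through the vertices of $A$ to an $(s+1)$-matching in $\hh$, and for that you need each vertex of $A$ to have degree large enough to avoid all previously used vertices. Now observe what the two extremal choices of $i$ give you (ordering vertices so that $\deg(1)\geq\cdots\geq\deg(n)$, so that $|H_t|\leq s-t$ is equivalent to $\deg(s+1-t)\leq(s+2t)n$):
\begin{itemize}
\item \emph{Smallest} $i$: for every $t<i$ one has $\deg(s+1-t)>(s+2t)n$, i.e.\ the guaranteed high-degree vertices are $s-i+2,\ldots,s+1$, which lie \emph{outside} $[s-i]$ and hence need not be in $A$. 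The vertices you are forced to put in $A$ (namely $H_i$) are only known to have degree $>(s+2i)n$, and the padding vertices may have even smaller degree; this is not enough to run the greedy extension.
\item \emph{Largest} $i$ (the paper's choice): for every $t>i$ one has $\deg(s+1-t)>(s+2t)n$, i.e.\ vertex $j\in[s-i]$ has $\deg(j)>(3s+2-2j)n$. Now $A=[s-i]$, and these escalating degree bounds are exactly what the greedy extension needs: starting from an $(i+1)$-matching in $\hh-A$, one successively finds an edge through $s-i$, then $s-i-1$, \ldots, then $1$, each time avoiding all previously used vertices. This yields an $(s+1)$-matching, contradicting $\nu(\hh)\leq s$, so $\nu(\hh-A)\leq i$.
\end{itemize}
So the fix is simple but essential: take the maximal $i$, set $A=[s-i]$, and drop the matching $M$ from the construction entirely.

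A secondary issue: your choice of $S\setminus A$ as ``one arbitrary vertex from each $f_j$'' will not produce $|\hh[\bar S]|\leq 4i^2n$ without further care. The paper instead takes $S\setminus A$ to be the $i$ vertices of $V(M')$ of \emph{largest} degree in $\hh[\bar A]$, and then uses the rainbow-matching Lemma~\ref{Lemma-rainbow matching} to show that the remaining (at most $2i$) vertices of $V(M')$ each have degree at most $2in$ in $\hh[\bar A]$; summing gives the $4i^2n$. Your ``careful analysis exploiting $\nu(\hh[\bar S])\leq i$'' is on the right track but would need exactly this refinement to close the gap.
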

\begin{proof}
Without loss of generality, assume that $\deg(1)\geq\deg(2)\geq\ldots \geq\deg(n)$.

\begin{claim}\label{claim-3.1}
There exists $0\leq i\leq s$ such that $\deg(s+1-i)\leq (s+2i)n$.
\end{claim}

\begin{proof}
Suppose for contradiction that $\deg(s+1-i)> (s+2i)n$ holds for all $i=0,1,2,\ldots,s$. Then one can find a matching of size $s+1$ by the following procedure. In the first step, by $\deg(s+1)> sn$ we can choose $e_{s+1}$ with $s+1\in e_{s+1}$ such that $e_{s+1} \cap [s+1]=\{s+1\}$. In the second step by $\deg(s)> (s+2)n$ there is  $e_{s}$ with $s\in e_{s}$ such that $e_{s} \cap ([s+1]\cup e_{s+1})=\{s\}$.  In the $(j+1)$th step with $j=0,1,2,\ldots,s$, by $\deg(s+1-j)> (s+2j)n$ there is  $e_{s+1-j}$ with $s+1-j\in e_{s+1-j}$ such that $e_{s} \cap ([s+1]\cup e_{s+1}\cup \ldots\cup e_{s+2-j})=\{s+1-j\}$. Eventually we will obtain a matching $e_{s+1},e_s,\ldots,e_1$, contradicting $\nu(\hh)\leq s$.
\end{proof}

By Claim \ref{claim-3.1}, we may choose the maximal $i$ such that $\deg(s+1-i)\leq (s+2i)n$. Let $A=[s-i]$ and $\bar{A}=[n]\setminus A$.
Let $\nu(\hh[\bar{A}])=k$ and let $e_1,e_2,\ldots,e_k$ be a maximal matching in $\hh[\bar{A}]$.

\begin{claim}\label{claim0}
$\nu(\hh[\bar{A}])=k\leq i$.
\end{claim}

\begin{proof}
If $k \geq i+1$, then by $\deg(j) > (3s+2-2j)n$ for $j \in [s-i]$, one can extend $e_1,e_2,\ldots,e_k$ to a matching of size $s+1$ in $\mathcal{H}$, contradicting $\nu(\mathcal{H}) \leq s$.
\end{proof}

 Let $e_1\cup e_2\cup \ldots\cup e_k =\{u_1,u_2,\ldots,u_{3k}\}=X$ and assume
 \[
 \deg_{\hh[\bar{A}]}(u_1)\geq\deg_{\hh[\bar{A}]}(u_2)\geq\cdots \geq\deg_{\hh[\bar{A}]}(u_{3k}).
 \]
 Since $e_1,e_2,\ldots,e_k$ form a maximal matching in $\hh[\bar{A}]$, we infer that $e\cap X\neq \emptyset$ for all $e\in \hh[\bar{A}]$.

If $\deg_{\hh[\bar{A}]}(u_{i+1})>2in$, then let $W=\bar{A}\setminus\{u_1,\ldots,u_{i+1}\}$ and $\deg_{\hh[W]}(u_{j})>in$ holds for each $j=1,2,\ldots,i+1$.  By Lemma \ref{Lemma-rainbow matching}, there is an $(i+1)$-matching in $\hh[\bar{A}]$, contradicting Claim \ref{claim0}. Thus,
 \[
  \deg_{\hh[\bar{A}]}(u_{3k})\leq \cdots\leq   \deg_{\hh[\bar{A}]}(u_{i+2})\leq \deg_{\hh[\bar{A}]}(u_{i+1})\leq 2 i n.
 \]

Now let $B=\{u_1,u_2,\ldots,u_i\}$. Recall that $\deg(u)\leq (s+2i)n$ for  $u\in B$. Thus,
\[
|\hh| \leq \sum_{u\in B} \deg(u) +|\hh[\bar{B}]| \leq i(s+2i)n+|\hh[\bar{B}]|.
\]
Let $S=A\cup B$.  Then
\begin{align*}
|\hh[\bar{B}]| &\leq \sum_{v\in A} \deg(v,\bar{S}) +\sum_{\{u,v\}\in \binom{A}{2}}\deg(u,v) +|\hh[\bar{S}]| \\[3pt]
&\leq  \sum_{v\in A} \deg(v,\bar{S}) +\sum_{\{u,v\}\in \binom{A}{2}}\deg(u,v) + \sum_{i+1\leq j\leq 3k} \deg_{\hh[\bar{A}]}(u_j)\\[3pt]
&\leq  \sum_{v\in A} \deg(v,\bar{S}) +\sum_{\{u,v\}\in \binom{A}{2}}\deg(u,v) + 2 i \cdot 2 i n.
\end{align*}
Thus,
\[
|\hh| \leq \sum_{v\in A} \deg(v,\bar{S}) +\sum_{\{u,v\}\in \binom{A}{2}}\deg(u,v) + i s n+ 6i^2n.
\]
\end{proof}

We now prove the $s\leq 2$ case of Theorem \ref{main1}.

\begin{prop}\label{prop-1}
For $n\geq 60$ and $s=1,2$,
\[
\ex_3(n,\{F_{5},M_{s+1}^{3}\}) = \binom{n-1}{2}
\]
and the full star is the unique $\{F_{5},M_{s+1}^{3}\}$-free 3-graph attaining the maximum size.
\end{prop}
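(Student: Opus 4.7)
For $s=1$ the hypothesis $\nu(\mathcal H)\le 1$ is exactly that $\mathcal H$ is an intersecting 3-graph, so Theorem \ref{thm-EKR} with $k=3,t=1$ (applicable since $n\ge 60>(t+1)(k-t+1)=6$) yields $|\mathcal H|\le\binom{n-1}{2}$ with equality iff $\mathcal H$ is a full star. Stars are trivially $F_5$-free, so this case is immediate.

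For $s=2$ I argue by contradiction: suppose $\mathcal H$ is $\{F_5,M_3^3\}$-free with $|\mathcal H|\ge\binom{n-1}{2}$ and is not the full star. If $\nu(\mathcal H)\le 1$, Theorem \ref{thm-HM} gives $|\mathcal H|\le\binom{n-1}{2}-\binom{n-4}{2}+1<\binom{n-1}{2}$ for $n\ge 60$, a contradiction. So $\nu(\mathcal H)=2$, and I apply Lemma \ref{lem-e(H)} with $s=2$ to obtain $i\in\{0,1,2\}$ and $A\subset S\subset[n]$ with $|A|=2-i$, $|S|=2$. The case $i=2$ is handled directly: the lemma gives $|\mathcal H|\le isn+6i^2n=28n<\binom{n-1}{2}$ for $n\ge 60$, a contradiction.

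For the cases $i\in\{0,1\}$ the crucial $F_5$-free observation is that for any two distinct $u,v\in A$ the link graphs $L_{\mathcal H}(u)[\bar S]$ and $L_{\mathcal H}(v)[\bar S]$ on $\bar S$ are jointly 2-colored triangle free: a 2-colored triangle with $ab,ac\in L_{\mathcal H}(u)[\bar S]$ and $bc\in L_{\mathcal H}(v)[\bar S]$ (where $a,b,c\in\bar S$ are distinct) would give $uab,uac,vbc\in\mathcal H$, a copy of $F_5$ on the five distinct vertices $\{u,a,b,c,v\}$. For $i=0$ (so $A=\{1,2\}$) Theorem \ref{thm-2-colored} then yields $\deg(1,\bar S)+\deg(2,\bar S)\le 2\lfloor(n-2)^2/4\rfloor$, and the residual term $\deg(1,2)$ is controlled as follows: every pair $\{a,b\}\in L_{\mathcal H}(1)[\bar S]\cap L_{\mathcal H}(2)[\bar S]$ forces, via $F_5$-freeness applied to $1ab,2ab$, $N(1,2)\subset\{a,b\}$; any two disjoint such pairs annihilate $\deg(1,2)$, while the exceptional configuration where $L_{\mathcal H}(1)[\bar S]\cap L_{\mathcal H}(2)[\bar S]$ is a star (with $\le n-3$ edges) conflicts with the tight regime of Theorem \ref{thm-2-colored}, leaving enough slack to close the gap.

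The most delicate case is $i=1$: here $|A|=1$, Theorem \ref{thm-2-colored} does not apply to a single link graph, and the bound $|\mathcal H|\le\deg(1,\bar S)+8n$ from Lemma \ref{lem-e(H)} exceeds $\binom{n-1}{2}$ by $\Theta(n)$. I exploit that $\nu(\mathcal H[\bar A])\le 1$ together with $\nu(\mathcal H)=2$ forces $\mathcal H[\bar A]$ to be a nonempty intersecting family: fixing any $e'=\{x,y,z\}\in\mathcal H[\bar A]$, $F_5$-freeness forces, for each $a\in[n]\setminus(e'\cup\{1\})$, at most one of $\{1ax,1ay,1az\}$ to lie in $\mathcal H$ (otherwise two of them together with $e'$ form $F_5$). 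A direct count then gives $\deg(1)\le\binom{n-1}{2}-\Omega(n)$, and combining with the EKR bound on $|\mathcal H[\bar A]|$ (with a symmetric treatment at $u_1$ if $\mathcal H[\bar A]$ is a star at $u_1$) yields $|\mathcal H|<\binom{n-1}{2}$, the final contradiction. The main obstacle throughout is this quantitative tightening: the naive combination of Lemma \ref{lem-e(H)} and Theorem \ref{thm-2-colored} leaves a slack of order $n$, which must be absorbed by the fine $F_5$-free structural analysis described above. Uniqueness of the full star then follows from the uniqueness in Theorem \ref{thm-EKR} once $\nu(\mathcal H)=1$ is established.
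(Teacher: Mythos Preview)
Your overall architecture matches the paper's: EKR for $s=1$, Lemma~\ref{lem-e(H)} for $s=2$ with a case split on $i\in\{0,1,2\}$, and the $i=2$ case is identical. However, your sketch for $i=1$ has a genuine quantitative gap. From a single edge $e'=\{x,y,z\}\in\tilde{\mathcal H}:=\mathcal H[\bar A]$ your ``at most one of $1ax,1ay,1az$'' observation (which is exactly the paper's Claim~\ref{claim s=2}) saves only $2(n-4)$ in $\deg(1)$, so you get $\deg(1)\le\binom{n-1}{2}-2(n-4)$. But the intersecting family $\tilde{\mathcal H}$ can have size up to $3n-11$ by Hilton--Milner when it is not a star, and up to $\deg(u_1)\le 4n$ when it is a star at $u_1$; both exceed $2(n-4)$, so $|\mathcal H|=\deg(1)+|\tilde{\mathcal H}|$ is \emph{not} forced below $\binom{n-1}{2}$ by ``direct count plus EKR'' alone. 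The paper has to work much harder here: it first uses two edges of $\tilde{\mathcal H}$ meeting in one point to push the savings in $e(L(1))$ up to $3n-14$, then in the star subcase it exploits $t$ pairwise almost-disjoint edges through the star center to get savings $(t+1)n+O(t^2)$, balanced against $|\tilde{\mathcal H}|\le t(n-2)$ via Erd\H{o}s--Gallai; and in the non-star subcase it caps $\Delta_2(\tilde{\mathcal H})$ by $n/3$ (else a large independent set in $L(1)$ appears) and then uses a sunflower-type cover by seven pairs to get $|\tilde{\mathcal H}|\le 7n/3$. Your ``symmetric treatment at $u_1$'' does not recover this: the available bound $\deg(u_1)\le 4n$ still swamps the $2n$ savings.

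There is also a missing subcase in your $i=0$ analysis: you treat only the situations where $L(1,\bar S)\cap L(2,\bar S)$ contains two disjoint pairs or is a (nonempty) star, but you omit $L(1,\bar S)\cap L(2,\bar S)=\emptyset$. In that subcase Theorem~\ref{thm-2-colored} is not the right tool; instead $\deg(1,\bar S)+\deg(2,\bar S)\le\binom{n-2}{2}$ by disjointness and one gets $|\mathcal H|\le\binom{n-2}{2}+(n-2)=\binom{n-1}{2}$ with equality \emph{possible}, and this is precisely where the paper needs an explicit $F_5$ (built from $1xy\in\mathcal H$, $12y\in\mathcal H$, $2xz\in\mathcal H$) to rule out equality under $\nu(\mathcal H)=2$. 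Since this is the only place equality can be approached, your final sentence about uniqueness via Theorem~\ref{thm-EKR} ``once $\nu(\mathcal H)=1$ is established'' is circular until this subcase is handled.
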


\begin{proof}
Note that the $s=1$ case of the proposition  follows  from Theorem \ref{thm-EKR}. Thus we are left with  the $s= 2$ case.
Let $\mathcal{H}\subset \binom{[n]}{3}$ be an $F_5$-free $3$-graph with  $\nu(\hh)\leq 2$. If $\nu(\hh)=1$, then by Theorem \ref{thm-EKR}
\[
|\hh| \leq \binom{n-1}{2}
\]
and we are done. Thus we may assume  $\nu(\hh)= 2$.

By Lemma \ref{lem-e(H)}, there exist $0\leq i \leq 2$ and $A\subset S\subset [n]$ with $|A|=2-i$ and $|S|=2$ such that $\deg(x)\leq 2(i+1)n$ for all $x\in \bar{A}$ and
\[
|\hh|\leq \sum_{v\in A} \deg(v,\bar{S})+\sum_{\{u,v\}\in \binom{A}{2}}\deg(u,v)+2 i n+6i^2n.
\]
If $i=2$, then $A=\emptyset$ and for $n\geq 60$,
 \[
 |\hh|\leq 28n< \binom{n-1}{2}.
 \]
Thus we may assume $i=1$ or $i=0$. We distinguish two cases.

  {\bf Case 1. } $i=1$.

  Assume  $A=\{1\}$ and let $\tilde{\hh}=\hh-\{1\}$, $G=L(1)$. Then by $\nu(\hh)= 2$ we infer that $\tilde{\hh}$ is intersecting and non-empty.

 \begin{claim}\label{claim s=2}
     For each $x_1x_2x_3\in \tilde{\hh}$  and  $v\in [2,n]\setminus  \{x_1, x_2, x_3\}$, there is at most one edge in $G$ between $v$ and  $\{x_1,x_2,x_3\}$.
  \end{claim}
  \begin{proof}
 If $vx_1,vx_2 \in E(G)$ then $x_1x_2x_3,1vx_1,1vx_2$ form an $F_5$, a contradiction.
  \end{proof}

  Recall that $\nu(\hh)= 2$ implies $\tilde{\hh}\neq \emptyset$. By Claim \ref{claim s=2} we have
  \[
  e(G) \leq \binom{n-1}{2} - 2(n-4).
  \]
  If $|\tilde{\hh}|< 2(n-4)$, then
  \[
  |\hh| = e(G)+|\tilde{\hh}| <  \binom{n-1}{2}.
  \]
  Thus we may assume $|\tilde{\hh}|\geq 2(n-4)$.

  If $\tilde{\hh}$ is 2-intersecting, then by Theorem \ref{thm-EKR}
    \begin{align*}
     |\tilde{\hh}|\leq \binom{(n-1)-2}{3-2}=n-3<2(n-4),
    \end{align*}
    a contradiction. Thus there exist $x_1x_2x_3,x_3x_4x_5\in \tilde{\hh}$. Then by Claim \ref{claim s=2}, for each $x\in [2,n]\setminus \{x_1,x_2,x_3,x_4,x_5\}$ there are at most 2 edges in $G$ between $x$ and $\{x_1,x_2,x_3,x_4,x_5\}$. Moreover, there are at most 2 edges in $G$ between $\{x_1,x_2,x_3\}$ and $\{x_4,x_5\}$. Thus,
    \begin{align}\label{ineq-3.1}
    e(G) \leq \binom{n-1}{2} - 3(n-6)- 4 = \binom{n-1}{2}-3n +14.
    \end{align}

    {\bf Subcase 1.1.} $\tilde{\hh}$ is a star.

    By symmetry we may assume $\tilde{\hh}$ is a star of center $2$. Then $|\tilde{\hh}|\leq \deg(2)\leq 4n$.

    \begin{claim}
    If there are $E_1,E_2,\ldots,E_t\in \tilde{\hh}$ such that $E_i\cap E_j=\{2\}$ for all $1\leq i<j\leq t$, then
    \begin{align}\label{ineq-3.2}
    e(G) \leq \binom{n-1}{2} -(t+1)n+t^2+3t+4.
    \end{align}
    \end{claim}
    \begin{proof}
    Let $X=E_1\cup E_2\cup \ldots\cup E_t$.
    By Claim \ref{claim s=2}, for each $x\in [2,n]\setminus X$ there are at most t edges in $G$ between $x$ and $X$. Moreover for $i=t,t-1,\ldots,2$, each $x\in E_i\setminus \{2\}$ there are at most $i-1$ edges in $G$ between $x$ and $E_1\cup \ldots\cup E_{i-1}$. Then
    \begin{align*}
    e(G) &\leq \binom{n-1}{2} -(t+1)(n-2t-2)-2\big(t+(t-1)+\cdots+2\big) \\[3pt]
    &=\binom{n-1}{2} -(t+1)n+t^2+3t+4.
    \end{align*}
    \end{proof}

    Let $\tilde{\hh}(2):=\{E\setminus \{2\}\colon E\in \tilde{\hh}\}$. If $\nu(\tilde{\hh}(2))\geq 4$, then one can find $E_1,E_2,E_3,E_4$ such that
such that $E_i\cap E_j=\{2\}$ for all $1\leq i<j\leq 4$. By \eqref{ineq-3.2} and $n\geq 60$,
     \[
     |\hh| = e(G)+|\tilde{\hh}| \leq  \binom{n-1}{2}-5n+32+4n <\binom{n-1}{2}.
     \]
    Thus we may assume $\nu(\tilde{\hh}(2))=t\in \{1,2,3\}$. Then by Theorem \ref{thm-eg} we have $|\tilde{\hh}|=|\tilde{\hh}(2)|\leq t(n-2)$.
     By \eqref{ineq-3.1},
     \begin{align*}
     |\hh| = e(G)+|\tilde{\hh}| &\leq   \binom{n-1}{2}-(t+1)n+t^2+3t+4+t(n-2)\\[2pt]
     &\leq \binom{n-1}{2}-n+16\\[2pt]
     &<\binom{n-1}{2}.
     \end{align*}

    {\bf Subcase 1.2.} $\tilde{\hh}$ is not a star.

By Theorem \ref{thm-HM} we have
\begin{align}\label{ineq-3.3}
|\tilde{\hh}|\leq \binom{n-2}{2}-\binom{n-3-2}{2}+1=3n-11.
\end{align}

Let
\[
\Delta_2(\tilde{\hh}) =\max_{\{u,v\}\subset [2,n]} |\{E\in\tilde{\hh}\colon \{u,v\}\subset E \}|.
\]
\begin{claim}\label{claim-sscase}
 We may assume that     $\Delta_2(\tilde{\hh}) < n/3$.
\end{claim}
\begin{proof}
    Let $\{u,v\} \subseteq [2,n]$ such that $|\{E\in\tilde{\hh}\colon \{u,v\}\subset E \}|$ is maximal and let $X=\{w\in [2,n]\colon uvw\in \tilde{\hh}\}$. Since $\hh$ is $F_5$-free,  $X$ has to be an independent set of $G$. If $|X| \geq n/3$, then for $n\geq 60$,
\begin{align*}
e(G) &\leq \binom{n-1}{2} - \binom{|X|}{2} \leq \binom{n-1}{2}- \frac{(n/3)(n/3 -1)}{2} < \binom{n-1}{2} -(3n-11).
\end{align*}
By \eqref{ineq-3.3},
\[
|\hh| = e(G)+|\tilde{\hh}|< \binom{n-1}{2} -(3n-11)+(3n-11)=\binom{n-1}{2}.
\]
Thus we may assume $|X|<n/3$.
  \end{proof}

Since  $\tilde{\hh}$ is not 2-intersecting, there exist $x_1x_2x_3,x_3x_4x_5\in \tilde{\hh}$. Since $\tilde{\hh}$ is not a star, there exists $y_1y_2y_3\in \tilde{\hh}$ with $x_3\notin \{y_1,y_2,y_3\}$. Then each edge of $\tilde{\hh}$ contains one of the following 7 pairs:
\[
\{x_1,x_4\},\{x_1,x_5\},\{x_2,x_4\},\{x_2,x_5\}, \{x_3,y_1\}, \{x_3,y_2\}, \{x_3,y_3\}.
\]
By Claim \ref{claim-sscase},
\[
|\tilde{\hh}|\leq 7\cdot\Delta_2(\tilde{\hh}) \leq \frac{7n}{3}.
\]
Using \eqref{ineq-3.1},
\[
|\hh| = e(G)+|\tilde{\hh}| \leq \binom{n-1}{2}-3n +14+\frac{7n}{3} <\binom{n-1}{2}.
\]

{\bf Case 2. } $i=0$.

Assume $S=A=\{1,2\}$. Then
\[
|\hh|\leq  \deg(1,[3,n])+\deg(2,[3,n])+ \deg(1,2).
\]
Since $\hh$ is $F_5$-free, we infer that $L(1,[3,n]),L(2,[3,n])$ are 2-colored triangle-free. By Theorem \ref{thm-2-colored},
\[
\deg(1,[3,n])+\deg(2,[3,n])\leq  2\left\lfloor\frac{(n-2)^2}{4}\right\rfloor \leq \frac{(n-2)^2}{2}.
\]
If $\deg(1,2) <\frac{n-2}{2}$, then
\[
|\hh|< \frac{(n-2)^2}{2}+\frac{n-2}{2}= \binom{n-1}{2}.
\]
Thus we may assume $\deg(1,2) \geq \frac{n-2}{2}\geq 3$.

Now by the $F_5$-free property, we infer that $L(1,[3,n])\cap L(2,[3,n])=\emptyset$. It follows that
\begin{align}\label{ineq-3.4}
|\hh|\leq  \deg(1,[3,n])+\deg(2,[3,n])+ \deg(1,2)\leq \binom{n-2}{2}+(n-2)=\binom{n-1}{2},
\end{align}
with equality holding if and only if $\deg(1,2)=n-2$ and $L(1,[3,n])\cap L(2,[3,n])=\binom{[3,n]}{2}$. Since $\nu(\hh)=2$ implies $L(1,[3,n])\neq \emptyset\neq  L(2,[3,n])$, there exist $xy\in L(1,[3,n])$ and $xz\in  L(2,[3,n])$. But then $1yx,1y2,2xz$ form an $F_5$, a contradiction. Thus we cannot have equality in \eqref{ineq-3.4} and equality holds if and only if $\hh$ is the full star.
\end{proof}

\section{The Proof of Theorem \ref{main1}}

\begin{proof}[Proof of Theorem \ref{main1}]
Let $\mathcal{H}\subset \binom{[n]}{3}$ be an $F_5$-free $ 3 $-graph with  $\nu(\hh)\leq s$. By Proposition \ref{prop-1} we may assume $s\geq 3$.
By Lemma \ref{lem-e(H)}, there exist $0\leq i \leq s$ and $A\subset S\subset [n]$ with $|A|=s-i$ and $|S|=s$ such that $\deg(x)\leq (s+2i)n$ for all $x\in \bar{A}$ and
\begin{align}\label{ineq-4.1}
|\hh|\leq \sum_{v\in A} \deg(v,\bar{S})+ \sum_{\{u,v\}\in \binom{A}{2}}\deg(u,v)+ is n+6i^2n.
\end{align}
Without loss of generality, we assume that $A=\{1,2,\ldots,s-i\}$ and $S=\{1,2,\ldots,s\}$.

If $i=s$, then $A=\emptyset$ and for  $n\geq 30s$ it follows from \eqref{ineq-4.1} that
 \[
  |\hh|\leq 7s^2n<s\cdot \frac{(n-s)^2}{4}-s\leq s\left\lfloor\frac{(n-s)^2}{4}\right\rfloor.
 \]

Now we distinguish three cases.

\vspace{2pt}
{\bf Case 1. } $i=s-1$.
\vspace{2pt}

 If $\hh[\bar{S}]\neq\emptyset$, then by Claim \ref{claim s=2} we have
\[
\deg(1,\bar{S})\leq \binom{n-s}{2}-2(n-s-3).
\]
 By \eqref{ineq-4.1} and $n\geq 30(s+1)$, we have
 \begin{align*}
   |\hh|&\leq \binom{n-s}{2}-2(n-s-3)+(s-1)sn+6(s-1)^2n\\[3pt]
   &<s\cdot\left(\frac{(n-s)^2}{4}-1\right) -\left(\frac{(s-2)(n-s)^2}{4}-s+2(n-s-3)-(s-1)sn-6(s-1)^2n\right).
 \end{align*}
 Note that
 \begin{align*}
 &\frac{(s-2)(n-s)^2}{4}-s+2(n-s-3)-(s-1)sn-6(s-1)^2n\\[3pt]
 = \ & \frac{1}{4}\left((s-2)n^2-(30 s^2- 56 s +16)n+s^3- 2 s^2-12 s-24\right)\\[3pt]
\geq\  & \frac{1}{4}\left(30(s-2)(s+1)n-(30 s^2- 56 s +16)n+s^3- 2 s^2-12 s-24\right)\\[3pt]
 =\  &\frac{1}{4}\left((26 s -76)n+s^3- 2 s^2-12 s-24\right)\\[3pt]
 \geq\  &\frac{1}{4}\left(30\times(26 s -76)(s+1)+s^3- 2 s^2-12 s-24\right)\\[3pt]
 \geq \ & \frac{1}{4}(s^3+ 778 s^2- 1512 s-2304)\\[3pt]
 \geq \  &\frac{1}{4}(s^3+ 778\times 3 s- 1512 s-2304)\\[3pt]
 \geq\  &\frac{1}{4}(s^3+ 822 s-2304)>0.
 \end{align*}
 Thus,
 \begin{align*}
  |\hh| <s\cdot\left(\frac{(n-s)^2}{4}-1\right) \leq s\left\lfloor\frac{(n-s)^2}{4}\right\rfloor.
 \end{align*}

 If $\hh[\bar{S}]=\emptyset$, noting that $\deg(v)\leq (3s-2)n$ for all $v\in \bar{A}$, then for $n\geq 30s$ we have
 \begin{align*}
   |\hh|\leq \deg(1,\bar{S})+\sum_{j=2}^{s}\deg(j)
   &\leq \binom{n-s}{2}+(s-1)(3s-2)n\\[2pt]
   &<s\cdot\frac{(n-s)^2}{4}-s \\[2pt]
   &\leq s\left\lfloor\frac{(n-s)^2}{4}\right\rfloor.
 \end{align*}

\vspace{2pt}
{\bf Case 2. } $\frac{s}{2}\leq i\leq s-2$.
\vspace{2pt}

Since $\hh$ is $F_5$-free, we infer that $L(1,\bar{S}),L(2,\bar{S}),\ldots,L(s-i,\bar{S})$ are 2-colored triangle-free. By Theorem \ref{thm-2-colored},
\[
 \sum_{v\in A} \deg(v,\bar{S}) \leq (s-i)\left\lfloor\frac{(n-s)^2}{4}\right\rfloor.
\]
Using \eqref{ineq-4.1}, we obtain that
\begin{align*}
	|\hh|&\leq (s-i)\left\lfloor\frac{(n-s)^2}{4}\right\rfloor+\binom{s-i}{2}n+6i^2n+isn\\[3pt]
          &= s\left\lfloor\frac{(n-s)^2}{4}\right\rfloor-i\left\lfloor\frac{(n-s)^2}{4}\right\rfloor+\frac{1}{2}(s-i)(s-i-1)n+6i^2n+isn\\[3pt]
          &= s\left\lfloor\frac{(n-s)^2}{4}\right\rfloor+\frac{1}{2}s(s-1)n+i\left(\frac{13}{2}in+\frac{n}{2}-\left\lfloor\frac{(n-s)^2}{4}\right\rfloor\right)\\[3pt]
          &\leq  s\left\lfloor\frac{(n-s)^2}{4}\right\rfloor+\frac{s(s-1)+13i^2+i}{2}n-\frac{i(n-s)^2}{4}+i.
	\end{align*}
Let $f(n,s,i)=\frac{s(s-1)+13i^2+i}{2}n-\frac{i(n-s)^2}{4}+i$.
Note that $f(n,s,i)$ is a convex function of $i$. By $n\geq 30s$ we have
\[
f(n,s,s-2)=-\frac{s-2}{4} n^2+\left(\frac{15}{2}s^2- 27 s +25\right)n-\left(\frac{s^3}{4}-\frac{s^2}{2}-s+2\right)<0
\]
and
\[
f(n,s,\frac{s}{2})=-\frac{s}{8} \left( n^2 -( 19 s-2) n + s^2-4\right)<0.
\]
Thus,
\begin{align*}
	|\hh| \leq  s\left\lfloor\frac{(n-s)^2}{4}\right\rfloor+\max\left\{f(n,s,s-2),f(n,s,\frac{s}{2})\right\}< s\left\lfloor\frac{(n-s)^2}{4}\right\rfloor.
\end{align*}

\vspace{2pt}
{\bf Case 3. } $ 0\leq i<\frac{s}{2}$.
\vspace{2pt}

Define
\[
T=\left\{v\in A\colon \deg(v,\bar{S})>\frac{(n-s)^2}{5}\right\}.
\]
Recall that $A=\{1,2,\ldots,s-i\}$ and $S=\{1,2,\ldots,s\}$.
By symmetry we may assume $T=\{1,2,\ldots,t\}$. Then for $j\in \{t+1,\ldots,s-i\}$ we have
\begin{align}\label{ineq-case2-1}
 \deg(j,\bar{S}) \leq \frac{(n-s)^2}{5}.
\end{align}

\vspace{2pt}
{\bf Subcase 3.1.} $|T|\leq \frac{s}{2}$.
\vspace{2pt}

Note that $s\geq 3$ and $i<\frac{s}{2}$ implies $s-i\geq 2$. Since $L(1,\bar{S}),L(2,\bar{S})$ are 2-colored triangle-free, by Theorem \ref{thm-2-colored} we have
\begin{align}\label{ineq-1}
 \deg(1,\bar{S})+\deg(2,\bar{S})\leq 2\left\lfloor\frac{(n-s)^2}{4}\right\rfloor.
\end{align}
If $|T|\leq 1$, then  by \eqref{ineq-4.1},  \eqref{ineq-case2-1} and \eqref{ineq-1} we have
\begin{align*}
	|\hh|\leq 2\left\lfloor\frac{(n-s)^{2}}{4}\right\rfloor+(s-i-2)\frac{(n-s)^2}{5}+\binom{s-i}{2}n+6i^2n+isn.
\end{align*}
Let $g(n,s,i)=-i\frac{(n-s)^2}{5}+\binom{s-i}{2}n+6i^2n+isn$. Then
\[
|\hh|\leq s\left\lfloor\frac{(n-s)^{2}}{4}\right\rfloor+s-\frac{1}{20}(s-2)(n-s)^2+g(n,s,i).
\]
For $n\geq 23s$, one can check that
\[
g(n,s,i)\leq \max\left\{g(n,s,0),g(n,s,\frac{s}{2})\right\}<\frac{1}{20}(s-2)(n-s)^2-s.
\]
Thus $|\hh|< s\left\lfloor\frac{(n-s)^{2}}{4}\right\rfloor$. Therefore we may assume $2\leq |T|\leq s-i$.

Next assume $2\leq |T|\leq \frac{s}{2}$. Since $L(1,\bar{S}),L(2,\bar{S}),\ldots,L(t,\bar{S})$ are 2-colored triangle-free, by Theorem \ref{thm-2-colored} we infer that
\begin{align}\label{ineq-case2-2}
 \sum_{v\in T} \deg(v,\bar{S}) \leq t\left\lfloor\frac{(n-s)^2}{4}\right\rfloor.
\end{align}
Using \eqref{ineq-4.1} and \eqref{ineq-case2-1}, we get
\begin{align*}
|\hh|&\leq t\left\lfloor\frac{(n-s)^{2}}{4}\right\rfloor+(s-i-t)\frac{(n-s)^2}{5}+\binom{s-i}{2}n+6i^2n+isn\\[3pt]
&\leq t\left\lfloor\frac{(n-s)^{2}}{4}\right\rfloor+(s-t)\frac{(n-s)^2}{5}+g(n,s,i)\\[3pt]
&\leq s\left\lfloor\frac{(n-s)^{2}}{4}\right\rfloor+s-(s-t)\frac{(n-s)^2}{20}+g(n,s,i)\\[3pt]
&\leq s\left\lfloor\frac{(n-s)^{2}}{4}\right\rfloor+s-\frac{s(n-s)^2}{40}+g(n,s,i).	
\end{align*}
By $n\geq 22s$, one can check that
\[
g(n,s,i)\leq \max\left\{ g(n,s,0),g(n,s,\frac{s}{2})\right\}<\frac{s(n-s)^2}{40}-s.
\]
Thus $|\hh|< s\left\lfloor\frac{(n-s)^{2}}{4}\right\rfloor$.

\vspace{2pt}
{\bf Subcase 3.2.} $|T|>\frac{s}{2}$ and there exist $x,y\in T$ such that $L(x,\bar{S})\cap L(y,\bar{S})= \emptyset$.
\vspace{2pt}

Since $\hh$ is $F_5$-free, we infer that $L(1,\bar{S}),L(2,\bar{S}),\ldots,L(s-i,\bar{S})$ are 2-colored triangle-free. Moreover,
\[
L(x,\bar{S})\cap L(y,\bar{S})\neq \emptyset,\ \deg(x,\bar{S})>\frac{(n-s)^2}{5}\mbox{ and } \deg(y,\bar{S})>\frac{(n-s)^2}{5}.
\]
Applying Theorem \ref{thm-disjoint} with $\alpha=\frac{1}{20} $ and $\beta=\frac{1}{28}$, we obtain that
\[
\sum_{v\in A} \deg(v,\bar{S}) \leq (s-i) \frac{(n-s)^2}{4} -\frac{1}{28}(s-i-2)  (n-s)^2.
\]
Then by  \eqref{ineq-4.1} we  obtain that
	\begin{align*}
	|\hh|&\leq (s-i) \frac{(n-s)^2}{4} -\frac{1}{28}(s-i-2)(n-s)^2+\binom{s-i}{2}n+6i^2n+isn.
\end{align*}
Let $h(n,s,i) = -i \frac{(n-s)^2}{4}+\frac{i}{28}(n-s)^2+\binom{s-i}{2}n+6i^2n+isn$. Then
\[
|\hh| \leq s \frac{(n-s)^2}{4} -\frac{1}{28}(s-2)(n-s)^2+h(n,s,i)\leq  s\left\lfloor\frac{(n-s)^2}{4}\right\rfloor+s-\frac{1}{28}(s-2)(n-s)^2+h(n,s,i).
\]
One can check that for $n\geq 30(s+1)$,
\[
h(n,s,i)\leq \max\left\{ h(n,s,0),h(n,s,\frac{s}{2})\right\}<\frac{1}{28}(s-2)(n-s)^2-s.
\]
Thus $|\hh|< s\left\lfloor\frac{(n-s)^2}{4}\right\rfloor$.

\vspace{2pt}
{\bf Subcase 3.3.}  $|T|>\frac{s}{2}$ and $L(x,\bar{S})\cap L(y,\bar{S})\neq \emptyset$ for any $x,y\in T$.
\vspace{2pt}

Assume $uv\in L(x,\bar{S})\cap L(y,\bar{S})$.  If $xyz \in \hh$, then by the $F_5$-free property we infer that $z\in \{u,v\}$. It follows that  $\deg(x,y)\leq 2$ for any $x,y\in T$. Thus,
\begin{align}\label{ineq-s2}
  \sum_{\{u,v\}\in \binom{A}{2}}\deg(u,v) \leq  \binom{s-i}{2}n -\binom{t}{2}(n-2).
\end{align}

\begin{claim}\label{claim3}
We may assume $|A|=s$.
\end{claim}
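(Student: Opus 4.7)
The plan is to show directly that if $i \geq 1$, then $|\hh| < s\lfloor(n-s)^{2}/4\rfloor$ already holds; this finishes Subcase 3.3 in that range, so in what follows we may assume $i = 0$, i.e.\ $|A| = s$.

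I would start from \eqref{ineq-4.1} but sharpen the bound on $\sum_{v \in A}\deg(v,\bar{S})$ by splitting $A = T \cup (A \setminus T)$. For $v \in T$, the link graphs $\{L(v,\bar{S}) : v \in T\}$ are jointly 2-colored-triangle free (immediate from $F_5$-freeness), so Theorem~\ref{thm-2-colored} yields $\sum_{v\in T}\deg(v,\bar{S}) \le t\lfloor(n-s)^{2}/4\rfloor$. For $v \in A \setminus T$ the defining threshold of $T$ gives $\deg(v,\bar{S}) \le (n-s)^{2}/5$. Combining with \eqref{ineq-s2} and plugging into \eqref{ineq-4.1} produces
\[
|\hh| \le t\left\lfloor \tfrac{(n-s)^{2}}{4}\right\rfloor + (s-i-t)\tfrac{(n-s)^{2}}{5} + \binom{s-i}{2}n - \binom{t}{2}(n-2) + isn + 6i^{2}n.
\]

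Subtracting $s\lfloor(n-s)^{2}/4\rfloor$, the slack consists of three negative contributions: $-i\lfloor(n-s)^{2}/4\rfloor$, $-(s-i-t)(\lfloor(n-s)^{2}/4\rfloor-(n-s)^{2}/5) \le -(s-i-t)(n-s)^{2}/20$, and $-\binom{t}{2}(n-2)$, where $\binom{t}{2}\ge(s^{2}-1)/8$ since $t > s/2$ is an integer. Under $1 \le i < s/2$ and $s/2 < t \le s-i$, these should dominate the positive terms $\binom{s-i}{2}n + isn + 6i^{2}n = O(s^{2}n)$ as soon as $n \ge 30(s+1)$. Since the resulting expression is linear in $t$ and convex in $i$, it suffices, as in the endpoint analyses of Cases~1--2 and Subcase~3.1, to verify the inequality at the extreme corners $(i,t) = (1,\lfloor s/2\rfloor + 1)$ and $(i,t) = (\lfloor(s-1)/2\rfloor,\, s-i)$, which reduces to a routine polynomial check.

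The main obstacle will be the worst case $t = \lfloor s/2\rfloor + 1$: there $-\binom{t}{2}(n-2)$ only contributes $\Omega(s^{2}n)$, which by itself does not absorb $\binom{s-i}{2}n$ once $s$ is large (for example, one already sees at $s = 100$, $n = 30(s+1)$ that the two-term estimate $(s-i)\lfloor(n-s)^{2}/4\rfloor + \binom{s-i}{2}n - \binom{t}{2}(n-2)$ fails). It is precisely the additional saving $-(s-i-t)(n-s)^{2}/20$ from the improved bound on $A \setminus T$ — a refinement not needed in earlier subcases — that closes this gap, and the margin in $n \ge 30(s+1)$ is what ensures the net quadratic-in-$n$ contribution is negative.
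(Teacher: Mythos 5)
Your proposal follows essentially the same route as the paper: the paper also starts from \eqref{ineq-4.1}, splits $A$ into $T$ and $A\setminus T$ using \eqref{ineq-case2-2} and \eqref{ineq-case2-1}, adds \eqref{ineq-s2}, and arrives at exactly your displayed bound $f(i,t)$, then optimizes over $(i,t)$. The one substantive difference is how the optimization is organized: the paper shows $\partial f/\partial t\ge 0$ (this derivative bound is precisely where your ``extra saving'' $\lfloor(n-s)^2/4\rfloor-(n-s)^2/5\ge (n-s)^2/20$ enters), so the maximum sits at $t=s-i$, where the $-\binom{t}{2}(n-2)$ term cancels $\binom{s-i}{2}n$ up to $2\binom{s-i}{2}$ and the whole claim reduces to $-i\lfloor(n-s)^2/4\rfloor+2\binom{s-i}{2}+6i^2n+isn<0$ for $1\le i\le s/2$, checked at the endpoints by convexity in $i$. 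Your corner enumeration is slightly incomplete: a function linear in $t$ and convex in $i$ on the trapezoid $\{1\le i\le \lceil s/2\rceil-1,\ \lfloor s/2\rfloor+1\le t\le s-i\}$ must be checked at three corners, and you omit $(i,t)=(1,s-1)$ — which, by the paper's monotonicity argument, is in fact the binding one. It is easily handled (there the $-\binom{t}{2}(n-2)$ term absorbs $\binom{s-1}{2}n$ entirely and $-\lfloor(n-s)^2/4\rfloor$ dominates $sn+6n$ for $n\ge 30(s+1)$), so this is a patchable oversight rather than a flaw in the method; with that corner added your argument is correct and matches the paper's.
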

	 \begin{proof}
	 	Suppose for contradiction that $|A|\leq s-1$. Then by \eqref{ineq-4.1}, \eqref{ineq-case2-2} and \eqref{ineq-s2},	
\begin{align*}
	|\hh|&\leq t\left\lfloor\frac{(n-s)^2}{4}\right\rfloor+(s-i-t)\frac{(n-s)^2}{5}+ \binom{s-i}{2}n -\binom{t}{2}(n-2)+6i^2n+isn =:f(t).
\end{align*}
By $t\leq s-i$ and $n\geq 30(s+1)$,
\[
f'(t)= \left\lfloor\frac{(n-s)^2}{4}\right\rfloor-\frac{(n-s)^2}{5}-\frac{2t-1}{2}(n-2)\geq \frac{(n-s)^2}{20}-1-\frac{(2s-1)(n-2)}{2}\geq 0.
\]
Thus,
\begin{align*}
|\hh|\leq f(s-i)&= (s-i)\left\lfloor\frac{(n-s)^2}{4}\right\rfloor+2\binom{s-i}{2}+6i^2n+isn\\[3pt]
&\leq s\left\lfloor\frac{(n-s)^2}{4}\right\rfloor-i\frac{(n-s)^2}{4}+i+2\binom{s-i}{2}+6i^2n+isn.
\end{align*}
Let $q(n,s,i)=-i\frac{(n-s)^2}{4}+i+2\binom{s-i}{2}+6i^2n+isn$. Note that $|A|\leq s-1$ implies $i\geq 1$. Then for $n\geq 18s$ and $s\geq 3$,
\[
q(n,s,i) \leq \max\left\{q(n,s,1),q(n,s,\frac{s}{2})\right\}<0.
\]
Thus $|\hh|< s\lfloor\frac{(n-s)^{2}}{4}\rfloor$.
\end{proof}		
	
Now $S=A=\{1,2,\dots,s\}$. Then $i=0$ and by \eqref{ineq-4.1} we have
\begin{align}\label{ineq-4.1-2}
|\hh|\leq \sum_{v\in [s]} \deg(v,[s+1,n])+ \sum_{\{u,v\}\in \binom{[s]}{2}}\deg(u,v).
\end{align}

\begin{claim}\label{claim4}
We may assume $T=S$.
\end{claim}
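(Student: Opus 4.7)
The plan is to show that if $t:=|T|<s$, then $|\hh|$ is already strictly below the target $s\lfloor(n-s)^2/4\rfloor$; hence we may restrict to $T=S$. The starting point is inequality \eqref{ineq-4.1-2}, which I split as
\[
|\hh| \leq \sum_{v \in T} \deg(v,\bar S) + \sum_{v \in S \setminus T} \deg(v,\bar S) + \sum_{\{u,v\} \in \binom{[s]}{2}} \deg(u,v).
\]

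I then bound each of the three sums. For the first, since $\hh$ is $F_5$-free the link graphs $\{L(v,\bar S):v\in T\}$ on the vertex set $\bar S$ are pairwise 2-colored triangle free, so Theorem~\ref{thm-2-colored} yields $\sum_{v\in T}\deg(v,\bar S)\le t\lfloor(n-s)^2/4\rfloor$. For the second, the very definition of $T$ gives $\sum_{v\in S\setminus T}\deg(v,\bar S)\le (s-t)(n-s)^2/5$. For the third, the subcase hypothesis $L(x,\bar S)\cap L(y,\bar S)\ne\emptyset$ for every $x,y\in T$, combined with $F_5$-freeness, forces $\deg(x,y)\le 2$ for all $\{x,y\}\in\binom{T}{2}$ (exactly the argument that led to \eqref{ineq-s2}), while $\deg(u,v)\le n$ is trivial otherwise; adding these yields $\sum_{\{u,v\}\in\binom{[s]}{2}}\deg(u,v)\le\binom{s}{2}n-\binom{t}{2}(n-2)$. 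Combining all three gives $|\hh|\le f(t)$ where
\[
f(t):=t\left\lfloor\frac{(n-s)^2}{4}\right\rfloor+(s-t)\frac{(n-s)^2}{5}+\binom{s}{2}n-\binom{t}{2}(n-2).
\]

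It remains to verify $f(t)<s\lfloor(n-s)^2/4\rfloor$ for every $0\le t\le s-1$. Viewed as a real function, $f'(t)=\lfloor(n-s)^2/4\rfloor-(n-s)^2/5-(t-\tfrac12)(n-2)$, whose unique zero is of order $n/20$ and therefore exceeds $s$ for $n\ge 30(s+1)$. So $f$ is increasing on $[0,s]$, and it suffices to check the value at $t=s-1$. A short simplification $\binom{s}{2}n-\binom{s-1}{2}(n-2)=(s-1)(n+s-2)$ reduces the claim to
\[
\frac{(n-s)^2}{20}>1+(s-1)(n+s-2),
\]
which holds for all $n\ge 30(s+1)$ and $s\ge 3$ since the left side is quadratic in $n$ while the right side is linear. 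The only nontrivial step is this final numerical check at the boundary $n=30(s+1)$, handled in exactly the same spirit as the calculations at the end of Claim~\ref{claim3}.
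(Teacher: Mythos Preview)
Your proof is correct and follows essentially the same route as the paper's: both derive the bound $|\hh|\le f(t)$ with the same $f$, check that $f$ is increasing on the relevant range via $f'(t)\ge 0$, and then verify $f(s-1)<s\lfloor(n-s)^2/4\rfloor$. Your simplification $\binom{s}{2}n-\binom{s-1}{2}(n-2)=(s-1)(n+s-2)$ is in fact slightly sharper than the paper's corresponding expression $2\binom{s-1}{2}+sn$, but the overall argument is the same.
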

	 \begin{proof}
If  $t\leq s-1$, then \eqref{ineq-4.1-2} and \eqref{ineq-s2},
\begin{align*}
|\hh|&\leq t\left\lfloor\frac{(n-s)^2}{4}\right\rfloor+(s-t)\frac{(n-s)^2}{5}+ \binom{s}{2}n -\binom{t}{2}(n-2)=:f(t).
\end{align*}
By $t\leq s-1$ and $n\geq 30(s+1)$,
\[
f'(t)= \left\lfloor\frac{(n-s)^2}{4}\right\rfloor-\frac{(n-s)^2}{5}-\frac{2t-1}{2}(n-2)\geq 0.
\]
Thus,
\begin{align*}
|\hh|\leq f(s-1)&\leq (s-1)\left\lfloor\frac{(n-s)^2}{4}\right\rfloor+\frac{(n-s)^2}{5}+2\binom{s-1}{2}+sn\\[2pt]
&\leq s\left\lfloor\frac{(n-s)^2}{4}\right\rfloor-\frac{(n-s)^2}{20}+1+2\binom{s-1}{2}+sn\\[2pt]
&< s\left\lfloor\frac{(n-s)^2}{4}\right\rfloor
\end{align*}	
and we are done.
\end{proof}		
	
Recall that  $T=\{v\in A\colon \deg(v,\bar{S})>\frac{(n-s)^2}{5}\}=A=S=\{1,2,\ldots,s\}$.

\begin{claim}\label{claim5}
We may assume that $T$ is an independent set.
\end{claim}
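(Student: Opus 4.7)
The plan is to show that $T = S$ is automatically an independent set, by directly reusing the $F_5$-freeness argument that already yielded $\deg(x,y) \leq 2$ earlier in Subcase~3.3. So strictly speaking there is nothing to ``assume'': the conclusion will hold unconditionally under the Subcase~3.3 hypothesis that $L(x,\bar S) \cap L(y,\bar S) \neq \emptyset$ for every pair $x, y \in T$.

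First I would fix three pairwise distinct vertices $x, y, z \in T$ and suppose for contradiction that $xyz \in \hh$. Since we are in Subcase~3.3, I can pick a pair $uv \in L(x,\bar S) \cap L(y,\bar S)$, which gives edges $xuv, yuv \in \hh$ with $u, v \in \bar S$. Because $z \in T \subseteq S$ while $u, v \in \bar S$, we have $z \notin \{u,v\}$, and $z \neq x, y$ since $xyz$ is a $3$-element edge. Hence the five vertices $u, v, x, y, z$ are all distinct.

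Now the three edges $xuv,\, yuv,\, xyz$ form a copy of $F_5$: take $a = u$, $b = v$, $c = x$, $d = y$, $e = z$, so that $abc = xuv$, $abd = yuv$, and $cde = xyz$. This contradicts the $F_5$-freeness of $\hh$, and therefore no such triple $\{x,y,z\} \subseteq T$ can exist, i.e.\ $\hh[T]$ has no edges. So $T$ is an independent set of $\hh$, as required.

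There is essentially no obstacle here; the step is just an observation repackaging the deduction used to prove $\deg(x,y) \leq 2$ for $x,y\in T$. The substantive work will come afterwards, when the independence of $T = S$ is combined with $\deg(u,v) \leq 2$ for $\{u,v\} \in \binom{S}{2}$ and with the 2-colored Mantel bound of Theorem~\ref{thm-2-colored} applied to $L(1,\bar S),\ldots,L(s,\bar S)$, in order to force $|\hh| \leq s\lfloor (n-s)^2/4 \rfloor$ and pin down the extremal configuration as $\mathcal{H}_3(n,s)$.
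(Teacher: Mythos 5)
There is a genuine gap: you have proved a weaker statement than the one the paper needs. You interpreted ``$T$ is an independent set'' as ``$\hh$ has no edge with all three vertices in $T$,'' but the way Claim~\ref{claim5} is used afterwards forces the stronger meaning that \emph{no edge of $\hh$ contains two vertices of $T$}, i.e.\ $\deg(u,v)=0$ for every pair $\{u,v\}\in\binom{T}{2}$. This is what makes the cross term $\sum_{\{u,v\}\in \binom{[s]}{2}}\deg(u,v)$ in \eqref{ineq-4.1-2} vanish, so that the final line reads $|\hh|=\sum_{v\in[s]}\deg(v,[s+1,n])\leq s\lfloor (n-s)^2/4\rfloor$. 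Your argument correctly rules out edges $xyz$ with $x,y,z\in T$ (and, in essence, any edge $xyv$ with $v\neq x,y$ and $v\notin\{u,v'\}$ for a witness pair $uv'\in L(x,\bar S)\cap L(y,\bar S)$), but it says nothing about edges of the form $12w$ with $1,2\in T=S$ and $w\in\bar S$, which is exactly the hard case.

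For such an edge $12w$ your $F_5$ construction can fail: every pair in $L(1,\bar S)\cap L(2,\bar S)$ might contain $w$, in which case the five vertices are not distinct and no copy of $F_5$ arises. Indeed the paper does not derive a contradiction in this case at all; it only gets $|L(1,\bar S)\cap L(2,\bar S)|\leq n-s$, and then must do real work --- bounding $e(G_3)$ for the square graph $G_3$ of $L(3,\bar S)$ via \eqref{ineq-degreepower} and using $(E(G_3))\cap(L(1,\bar S)\cup L(2,\bar S))=\emptyset$ --- to show that $|L(1,\bar S)|+|L(2,\bar S)|$ falls short of $2\lfloor(n-s)^2/4\rfloor$ by a positive fraction of $(n-s)^2$, whence $|\hh|<s\lfloor(n-s)^2/4\rfloor$ and this case is done. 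That is why the claim is phrased as ``we may assume'' rather than as an unconditional fact: the dichotomy is either $T$ is (strongly) independent, or the desired strict bound on $|\hh|$ already holds. Your proposal is missing this entire quantitative branch, and your closing remark that the independence will later be ``combined with $\deg(u,v)\leq 2$'' also signals the misreading: Claim~\ref{claim5} exists precisely to upgrade $\deg(u,v)\leq 2$ to $\deg(u,v)=0$.
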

 \begin{proof}
Suppose $12v\in \hh$. Then by the $F_5$-free property, for any $x_1x_2\in L(1,\bar{S})\cap L(2,\bar{S})$ we have $\{v\}\cap \{x_1,x_2\}\neq \emptyset$. It follows that $|L(1,\bar{S})\cap L(2,\bar{S})|\leq n-s$.
Define $G_3$ as the square graph of $L(3,\bar{S})$ on the vertex set $\bar{S}$ such that  $xy\in E(G_3)$ if and only if there exists $z\in \bar{S}$ such that $xz,yz\in L(3,\bar{S})$. By the $F_5$-free property, we infer that $E(G_3)\cap (L(1,\bar{S})\cup L(2,\bar{S}))=\emptyset$.
 Since
 $|L(3,\bar{S})|>\frac{(n-s)^2}{5}$, by \eqref{ineq-degreepower} we obtain that
\begin{align*}
		e(G_3)>\frac{1}{n-s}\cdot\sum_{v\in \bar{S}}\binom{\deg_{L(3,\bar{S})}(v)}{2}
&=\frac{1}{2(n-s)} \sum_{v\in \bar{S}}\left(\deg_{L(3,\bar{S})}(v)^2-\deg_{L(3,\bar{S})}(v)\right)\\[3pt]
        &\geq \frac{1}{2(n-s)}\left(\frac{4|L(3,\bar{S})|^2}{n-s}-2|L(3,\bar{S})|\right)\\[3pt]
        &>\frac{2}{25}(n-s)^2-\frac{n-s}{5}.
	\end{align*}
Consequently,
\[
|L(1,\bar{S})\cup L(2,\bar{S})|\leq \binom{n-s}{2}-e(G_3)\leq \binom{n-s}{2}-\frac{2}{25}(n-s)^2+\frac{n-s}{5}
\]
and then
\begin{align*}
|L(1,\bar{S})|+ |L(2,\bar{S})|&\leq |L(1,\bar{S})\cup L(2,\bar{S})|+|L(1,\bar{S})\cap L(2,\bar{S})|\\[3pt]
&\leq \binom{n-s}{2}-\frac{2}{25}(n-s)^2+\frac{n-s}{5}+n-s\\[3pt]
&\leq
\frac{(n-s)^2}{2}-\frac{2}{25}(n-s)^2+n-s.
\end{align*}
By \eqref{ineq-4.1-2} and $n\geq 7s$, it follows that
\begin{align*}
	 	|\hh|
	 	&< (s-2)\left\lfloor\frac{(n-s)^2}{4}\right\rfloor+2\binom{s}{2}+\frac{(n-s)^2}{2}-\frac{2}{25}(n-s)^2+n-s\\[3pt]
        &<s\left\lfloor\frac{(n-s)^2}{4}\right\rfloor+n+s^2-2s-\frac{2}{25}(n-s)^2\\[3pt]
        &<s\left\lfloor\frac{(n-s)^2}{4}\right\rfloor.
	 	\end{align*}

\end{proof}	

Finally, by \eqref{ineq-4.1-2}  and Theorem \ref{thm-2-colored} we conclude that
\[
|\hh| = \sum_{v\in [s]} \deg(v,[s+1,n])\leq s\left\lfloor\frac{(n-s)^{2}}{4}\right\rfloor
\]
with equality holding if and only if $\hh=\mathcal{H}_{3}(n,s)$ up to isomorphism.
\end{proof}

\section{Concluding remarks}

In this paper, we determine the Tur\'{a}n number of $\{F_{5},M_{s+1}^{3}\}$ for $n\geq 30(s+1)$. It is an interesting question that whether this result holds for the full range of $n$.

\begin{conj}
For $n\geq 3(s+1)$ and $s\geq 3$,
\[
\ex_3(n,\{F_{5},M_{s+1}^{3}\}) = s \left\lfloor\frac{(n-s)^{2}}{4}\right\rfloor.
\]
\end{conj}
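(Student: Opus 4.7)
The plan is to tighten the authors' proof of Theorem \ref{main1} so that the hypothesis drops from $n\geq 30(s+1)$ to $n\geq 3(s+1)$. Since $\mathcal{H}_3(n,s)$ already realises the claimed lower bound and is $\{F_5,M_{s+1}^3\}$-free, only the matching upper bound is at stake. My strategy keeps the overall scheme---reduce to the $2$-coloured Mantel-type estimate Theorem \ref{thm-2-colored} through a Lemma \ref{lem-e(H)}-style degree/neighbourhood decomposition---but sharpens every step whose looseness currently forces $n$ to be of order $s^2$ rather than $s$.

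First I would refine Lemma \ref{lem-e(H)}. Its error term $isn+6i^2n$ comes from two wasteful places: charging each $u\in B$ with its full degree $(s+2i)n$, and bounding $\deg_{\hh[\bar A]}(u_j)$ by $2in$ for $j\geq i+1$. The $F_5$-free property actually imposes a $2$-coloured triangle restriction on the link graphs of $B$-vertices as well, because for $u\in B$ any pair $xy\in L(u,\bar S)$ whose common neighbourhood in $L(u,\bar S)$ is non-empty blocks many edges in $L(v,\bar S)$ for $v\in A$. Running the Theorem \ref{thm-2-colored} argument on the enlarged colour palette $A\cup B$ rather than only on $A$ should bring the error down to $O(sn)$, which is already absorbed by the slack in the main term once $n\geq 3(s+1)$.

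The genuine bottleneck is Case 1 of the main proof ($i=s-1$, $|A|=1$): the single $A$-vertex can have a link graph of size almost $\binom{n-s}{2}$, and the slack $2(n-s-3)$ from Claim \ref{claim s=2} has to absorb the full missing $(s-1)\lfloor (n-s)^2/4\rfloor$. The natural remedy is induction on $s$ based on Proposition \ref{prop-1}: the $3$-graph $\hh-\{1\}$ is $\{F_5, M_s^3\}$-free, so by the inductive hypothesis it has at most $(s-1)\lfloor (n-s)^2/4\rfloor$ edges up to small corrections; combined with a Hilton--Milner bound on $L(1)$ via Theorem \ref{thm-HM}, this yields the needed quadratic-in-$n$ deficit. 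Subcases 3.1--3.3 in the stability regime should then close using a stability version of Theorem \ref{thm-2-colored} together with the Frankl--F\"uredi characterisation (Theorem \ref{thm-FF}) applied to $\hh[\bar A]$.

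The main obstacle is making the induction on $s$ rigorous. The base case $s=3$ needs a hands-on argument for $n\in[12,120]$, most likely via direct case analysis near $\mathcal{H}_3(n,3)$. The inductive step also requires upgrading the uniqueness clause of Theorem \ref{main1} to a genuine stability statement with an explicit closeness parameter depending on the edge-count deficit, which is not provided in the current paper and is where the bulk of the technical effort will concentrate. If such a stability statement can be established for all $n\geq 3(s+1)$, the pieces above should assemble into a proof of the conjecture.
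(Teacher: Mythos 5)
This statement is posed in the paper as an open conjecture in the concluding remarks; the authors prove the result only for $n\geq 30(s+1)$ and explicitly leave the range $3(s+1)\leq n<30(s+1)$ unresolved. So there is no paper proof to compare against, and your text must be judged as a standalone argument. As written it is a research plan, not a proof: every step that would actually close the gap is deferred. You yourself flag the three load-bearing pieces --- a refined version of Lemma \ref{lem-e(H)} with error $O(sn)$, a quantitative stability version of Theorem \ref{thm-2-colored} and of Theorem \ref{main1}, and a hands-on base case $s=3$, $n\in[12,120]$ --- and none of them is carried out. A proposal that says ``if such a stability statement can be established \dots the pieces should assemble'' has not proved the conjecture.

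Beyond incompleteness, two of your concrete suggestions do not work as stated. First, in the bottleneck Case 1 ($|A|=1$), induction on $s$ gives $|\hh-\{1\}|\leq (s-1)\bigl\lfloor (n-s)^2/4\bigr\rfloor$, so you still must show $\deg(1)\leq \bigl\lfloor (n-s)^2/4\bigr\rfloor$, whereas a priori $\deg(1)$ can be as large as $\binom{n-1}{2}$; the Hilton--Milner theorem (Theorem \ref{thm-HM}) is a bound on intersecting set families and says nothing about the link graph $L(1)$, which need not be intersecting in any sense. (In the paper's $s=2$ argument, Hilton--Milner is applied to the intersecting $3$-graph $\tilde{\hh}$, not to $L(1)$; the saving on $e(L(1))$ comes from Claim \ref{claim s=2} and is only linear in $n$, which is exactly why the paper needs $n=\Omega(s^2)$ there.) You would need a genuinely new argument showing that a vertex of degree exceeding $(n-s)^2/4$ forces either an $F_5$ or a large matching, and that is not sketched. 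Second, the claim that running the Theorem \ref{thm-2-colored} argument ``on the enlarged colour palette $A\cup B$'' reduces the Lemma \ref{lem-e(H)} error to $O(sn)$ is asserted without justification: the vertices of $B$ are not known to have their links confined to $\bar S$, and the edges counted in the $6i^2n$ term live inside $\hh[\bar A]$ and meet $X$, so they are not captured by any link graph over $\bar S$. Until these gaps are filled the conjecture remains open.
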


As a hypergraph analog of Theorem \ref{thm-2-colored}, we end up this paper with the following conjecture.

\begin{conj}
	Let $p\geq 2$ and let $\hh_{1},\ldots,\hh_{p}\subset\binom{[n]}{3}$ be 2-colored $F_5$-free. Then
	\[
	\sum_{i=1}^{p} |\hh_i|\leq \max\left\{\binom{n}{3},\ p \left\lfloor \frac{n}{3}\right\rfloor \left\lfloor \frac{n+1}{3}\right\rfloor \left\lfloor \frac{n+2}{3}\right\rfloor\right\}.
	\]
\end{conj}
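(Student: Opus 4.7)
The plan is to mirror the two-case induction on $n$ used in the proof of Theorem \ref{thm-2-colored}, with pair-codegrees in $3$-graphs playing the role that vertex-degrees play in the graph case. The key structural constraint, the hypergraph analog of \eqref{ineq-key1}, should be: if $\{a,b,c\},\{a,b,d\}\in\hh_i$, then for every $j\neq i$ and every $e\in[n]$ the triple $\{c,d,e\}$ cannot lie in $\hh_j$, since $\{abc,abd,cde\}$ would then form a $2$-colored $F_5$. Equivalently, whenever $|N_{\hh_i}(a,b)|\geq 2$, every pair inside $N_{\hh_i}(a,b)$ is a forbidden codegree pair in every $\hh_j$ with $j\neq i$. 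Summing this constraint over pairs and colors should yield the codegree analog of the degree-sum inequality driving Case 2 of the proof of Theorem \ref{thm-2-colored}.

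With base cases handled by Theorem \ref{thm-FF} (and the Keevash-Mubayi refinement), I would split the inductive step into two cases. \textbf{Case 1}: every codegree $d_{\hh_i}(a,b)$ is at most $(1/3+o(1))n$. Then a Jensen-type argument on the codegree sequence, in the spirit of \eqref{ineq-degreepower} applied to each link graph $L_{\hh_i}(v)$, should bound each $|\hh_i|$ by $t_3(n,3)$, giving $\sum_i|\hh_i|\leq p\cdot t_3(n,3)$. \textbf{Case 2}: some codegree $d_{\hh_i}(a,b)$ is large. Delete $\{a,b\}$ and use the forbidden-pair constraint to limit the contributions of the other colors at $\{a,b\}$; the induction hypothesis applied to $\hh_\ell-\{a,b\}$ for all $\ell$ should close the deficit against $p\cdot t_3(n,3)-p\cdot t_3(n-2,3)$. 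Equality analysis in both cases should force either a single color to saturate $\binom{[n]}{3}$ or all colors to coincide with a common balanced $3$-partition.

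The main obstacle is the crossover between the two conjectured extremal constructions: taking $\hh_1=\binom{[n]}{3}$ with the other colors empty gives $\binom{n}{3}$, whereas taking every $\hh_i$ equal to a common $F_5$-extremal $3$-partite hypergraph gives $p\cdot t_3(n,3)$, and these balance around $p=9/2$. The intermediate regime, where a few colors are very dense and the rest are sparse, is exactly where the induction is most fragile; it is also the regime in which Frankl's rainbow product conjecture failed (see \cite{rainbowtriangle2}), so one should expect subtle obstructions. Because $F_5$ has positive Tur\'an density, a purely Cauchy-Schwarz approach cannot suffice, and a stability version of the Frankl-F\"uredi theorem, saying that any $F_5$-free $3$-graph with density close to $2/9$ is structurally close to a balanced $3$-partition, is likely needed to pin down a common $3$-partition shared by the dense colors before Theorem \ref{thm-2-colored} can be applied inside each link $L_{\hh_i}(v)$. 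Assembling this stability argument with the induction so that the crossover is handled cleanly is where the bulk of the technical work will sit.
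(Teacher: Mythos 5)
You should first note that this statement is not proved in the paper at all: it is stated as a concluding \emph{conjecture}, so there is no proof of the authors' to compare against, and your proposal must stand on its own. As written it is a programme rather than a proof, and its two pivotal steps do not go through in the form you describe. First, your Case 1 is unsupported: if every codegree satisfies $\deg_{\hh_i}(a,b)\leq(1/3+o(1))n$, then summing codegrees only gives $|\hh_i|\leq\frac{1}{3}\binom{n}{2}\cdot\frac{n}{3}(1+o(1))\approx n^{3}/18$, which is strictly larger than $\lfloor n/3\rfloor\lfloor(n+1)/3\rfloor\lfloor(n+2)/3\rfloor\approx n^{3}/27$; a Jensen/Cauchy--Schwarz argument on codegrees, in the spirit of \eqref{ineq-degreepower}, cannot by itself recover the $3$-partite bound, because unlike triangle-freeness, $F_5$-freeness has no local degree-sum reformulation — this is exactly why Theorem \ref{thm-FF} is hard. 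Nor can you fall back on Theorem \ref{thm-FF} colour by colour: a $2$-coloured-$F_5$-free family allows monochromatic copies of $F_5$ (indeed $\hh_1=\binom{[n]}{3}$ with all other colours empty is admissible), which is the whole reason the conjectured bound carries the $\binom{n}{3}$ term. For the same reason your base case is not ``handled by Theorem \ref{thm-FF}'': that theorem bounds a single $F_5$-free $3$-graph, not a family whose individual members may contain $F_5$. (A side remark: your forbidden-configuration claim is slightly off — from $abc,abd\in\hh_i$ one only gets $N_{\hh_j}(c,d)\subseteq\{a,b\}$ for $j\neq i$, not $N_{\hh_j}(c,d)=\emptyset$, since the fifth vertex of an $F_5$ must avoid $a$ and $b$; and the conjecture as stated even fails for very small $n$, e.g.\ $n=4$, where no $F_5$ exists, so any induction must start from a large, and currently unproved, threshold.)

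Second, the inductive engine of Theorem \ref{thm-2-colored} does not transfer. That proof works because the target bound is a single function $p\lfloor n^{2}/4\rfloor$, so deleting the two endpoints of a heavy edge loses exactly $p(n-1)$ from the bound, and \eqref{ineq-key1} caps the total degree contribution of the deleted pair by $p(n-1)$. Here the right-hand side is a maximum of two functions with different dependence on $p$ and on $n$, and removing two vertices changes $\binom{n}{3}$ and $p\,t_3(n,3)$ by incomparable amounts, so there is no analogous deficit computation; this is precisely the ``crossover'' regime you flag around $p\approx 9/2$, and flagging it is not the same as handling it. Your own closing admission — that a stability version of Theorem \ref{thm-FF} plus a mechanism to align the dense colours on a common $3$-partition ``is where the bulk of the technical work will sit'' — concedes that the decisive content of the conjecture is left open. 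So the proposal is a reasonable research plan, but it is not a proof, and Case 1, the base case, and the crossover step are all genuine gaps.
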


\end{document}